\RequirePackage{fix-cm}
\documentclass{cocv}
\usepackage{amsmath,amsthm}
\usepackage{mathtools}
\usepackage{xcolor}
\usepackage{amssymb}
\usepackage{amsfonts}
\usepackage{graphicx}
\usepackage{booktabs}
\usepackage{units}
\usepackage{comment}
\usepackage{caption}
\usepackage{subcaption} % for side-by-side images
\usepackage{dblfloatfix}   % patches LaTeX’s double-column float rules
\usepackage{algorithm}
\usepackage{algorithmicx}
\usepackage{algpseudocode}
\usepackage{enumerate}

\algrenewcommand\algorithmicindent{0.5em}
\usepackage{commath}

\usepackage{hyperref}
\hypersetup{
    colorlinks=true,
    linkcolor=blue,     
    urlcolor=blue,
}
\usepackage{xparse}

\usepackage{tikz}
\usepackage{pgfplots}
\pgfplotsset{compat=1.18}
\usepackage{appendix}

\renewcommand{\d}{\,\mathrm{d}}
\renewcommand{\Re}{\mathrm{Re}}
\newcommand{\ds}{ {\, \rm d} s}
\newcommand{\nm}[2][]{\left\lVert #2 \right\rVert_{#1}}
\newcommand{\inprod}[2]{\left\langle#1,#2\right\rangle}

\newcommand{\RR}{\mathbb R}
\newcommand{\CC}{\mathbb C}

\DeclareMathOperator{\loc}{loc}

\newcommand{\B}{\mathbb{B}}
\floatname{algorithm}{Pseudo code}

\newtheorem{definition}{Definition}

\allowdisplaybreaks
\newcounter{thmcounter}[section]  % Resets with each new section
\renewcommand{\thethmcounter}{\thesection.\arabic{thmcounter}}

\newcommand{\defthmwithqed}[2]{%
  \NewDocumentEnvironment{#1}{ o }{%
    \refstepcounter{thmcounter}% increment counter and allow referencing
    \begin{trivlist}%
      \item[\hskip \labelsep \bfseries #2~\thethmcounter%
        \IfValueT{##1}{\ (##1)}.]%
  }{%
    \hfill $\square$%
    \end{trivlist}%
  }%
}

\theoremstyle{definition}
\newtheorem{theorem}{Theorem}
\newtheorem{lemma}{Lemma}

\newtheorem{proposition}{Proposition}

\newtheorem{remark}{Remark}

\usetikzlibrary{arrows.meta,positioning,calc,decorations.pathmorphing}

{\left(\begin{smallmatrix}}%            begin code
{\end{smallmatrix}\right)}%             end code

\newenvironment{smallbmatrix}%          environment name
{\left[\begin{smallmatrix}}%            begin code
{\end{smallmatrix}\right]}%       

\begin{document}
%%-----------------------------
%%      the top matter
%%-----------------------------
\title{Well-posedness and passivity for a class of bilinear control systems}\thanks{This work was funded by the Deutsche Forschungsgemeinschaft (DFG, German
Research Foundation) – Project-ID 531152215 – CRC~1701. We thank Daniel Happ (University of Wuppertal) who pointed out a possible simplification in the proof of Theorem~\ref{thm:classical_solutions}.}% At most 5 thanks
\author{Abdelhakim Dahmani}\address{Port-Hamiltonian Institute, Bergische Universität Wuppertal, Gaußstraße 20, 42119 Wuppertal, Germany}
\author{Hannes Gernandt}\address{Port-Hamiltonian Institute, Bergische Universität Wuppertal, Gaußstraße 20, 42119 Wuppertal, Germany and Fraunhofer-Einrichtung für Energieinfrastrukturen und Geotechnologien (IEG), Gulbener Straße 23, 03046 Cottbus, Germany}
\author{René Hosfeld}\address{TU Berlin, Fasanenstraße 89,
10623 Berlin, Germany}
\author{Timo Reis}\address{TU Ilmenau, Weimarer Straße 25, 98693 Ilmenau, Germany}
\author{Tolgahan Tasci}\address{Port-Hamiltonian Institute, Bergische Universität Wuppertal, Gaußstraße 20, 42119 Wuppertal, Germany}
\date{\today}
\begin{abstract} 
We study an abstract class of bilinear infinite-dimensional systems that arises
in various applications, including district heating systems and quantum control.
First, we analyze the existence of mild and classical solutions for abstract
bilinear systems with admissible input operators in Banach spaces and study
continuous dependence on the data. When the underlying space is a Hilbert space,
these results are used to show passivity for a suitably defined co-located output
and for mild solutions. The results are applied to the bilinear Schr\"odinger
equation, the Fokker--Planck equation, and a district heating or cooling pipe.
\end{abstract}
%
%\begin{resume} \end{resume}
%
\subjclass{47D06, 93C10, 93C20, 93C25}
\keywords{Bilinear control systems, $C_0$-semigroups, classical solutions, admissible control operators, passive systems}
\maketitle
%%-----------------------------
%%      your text
%%-----------------------------
\section{Introduction}

Bilinear control systems form an important class of systems that arise naturally in many applications. They occur, for instance, in power systems~\cite{GerSZMS25}, district heating systems~\cite{vdHeFRSSBMN17}, diffusion processes~\cite{Breiten18}, and quantum control systems \cite{Beauchard2013}. A typical bilinear system is of the form
\[
    \dot x(t) = A x(t) + \sum_{j=1}^m u_j(t) B_j x(t) + B_0 u_0(t),
\]
for some linear operators $A$ and $B_0,B_1,\ldots,B_m$, where the control $u=(u_0,\ldots,u_m)$ enters both additively and through products with the state.

From a control-theoretic point of view, bilinear systems are challenging because they occupy an intermediate position between linear and genuinely nonlinear systems. On the one hand, the dependence on the state is linear for each fixed input. On the other hand, the interaction between the state and the control introduces a nonlinear input-state relation. In particular, for a fixed control input one obtains a linear non-autonomous system whose generator depends on time through the control. Thus, methods from linear system theory cannot be applied directly, while general nonlinear techniques often fail to exploit the additional structure provided by bilinearity.

A central concept in the analysis and control of dynamical systems is passivity. Passivity expresses that the energy stored in the system cannot increase faster than the power supplied through the input and output ports. It is closely related to dissipativity and plays an important role in stability analysis, feedback design, and optimal control. For finite-dimensional bilinear systems, passivity properties are well studied, and the bilinear structure can be exploited, for example, in the construction of stabilizing feedback laws; see, for instance, \cite{Monafred23}. Passivity-based stabilization has also been investigated for infinite-dimensional systems in \cite{Berrahmoune2010}. Moreover, passivity has been used in optimal control for finite-dimensional linear systems \cite{Schaller21} and for infinite-dimensional systems \cite{hastir2026}.

For infinite-dimensional systems, passivity is particularly important because many physically motivated models are naturally described by energy balances. A prominent example is the class of port-Hamiltonian systems, where the Hamiltonian represents the stored energy and the port variables describe the exchange of energy with the environment. The infinite-dimensional theory has been developed primarily for linear systems, including systems with boundary control and observation. Passivity properties of time-varying systems have recently been analyzed in several settings, for instance for infinite-dimensional systems in \cite{SchnaubeltWeiss10}, for systems with port-Hamiltonian structure in \cite{Kurula24}, and for boundary control systems in \cite{JacoLaas21}. Further passivity results are known for certain classes of well-posed linear systems, for example in the framework of system nodes and passive systems \cite{Staffans2002,Philipp2025}.

Nevertheless, the passivity theory for infinite-dimensional bilinear systems is still less developed. This is due to several difficulties, including the possible unboundedness of the generator, the occurrence of unbounded control and observation operators, for instance in boundary control and observation, and the proper definition of a co-located output. In particular, even the well-posedness of the associated non-autonomous Cauchy problem requires careful analysis. By well-posedness we mean existence, uniqueness, and continuous dependence of solutions (and outputs) on the initial data and on the input. Several related well-posedness results are available in the literature. In \cite{Kato70}, abstract non-autonomous hyperbolic evolution equations are studied, and existence of classical solutions is proved under the assumption that the control functions are continuous for the linear part and continuously differentiable for the bilinear part. These results are also extended to larger classes of control functions in the purely bilinear case, where the linear part is absent. However, the theory is considerably richer in the parabolic case; classical references include \cite{Amann95} and \cite{lunardi1995analytic}. Furthermore, we refer to the survey \cite{Schnaubelt2002}, which contains a discussion about the well-posedness of both classes as well as their asymptotic behavior. Much less is known when the bilinear control operators are represented by unbounded operators $B_{1},\dots,B_{m}$. Nevertheless, several results for parabolic problems have been obtained within the framework of maximal $L^{p}$-regularity, where existence is studied in a different functional-analytic setting; see, for example, \cite{Arendt07} for details. Semilinear non-autonomous boundary control systems are considered in \cite{SchmDJL19}. Moreover, in \cite{AroBonKro2018}, weak and mild solutions for bilinear systems are analyzed in the case of one bilinear control term and with bounded control and observation operators; the authors also study related optimal control problems. These contributions provide important foundations, but they do not yield a passivity theory for bilinear systems with possibly unbounded admissible control operators.

The connection between bilinear systems and port-Hamiltonian systems is also of interest. In the infinite-dimensional port-Hamiltonian literature, the linear case is the most thoroughly understood. Some modeling frameworks allow nonlinearities in the Hamiltonian, but solvability and continuous dependence are not always addressed in detail. Recent contributions on nonlinear infinite-dimensional monotone port-Hamiltonian systems \cite{GernScha25} and on system-node formulations \cite{Philipp2025} provide new tools for treating nonlinear and time-varying effects. However, the specific bilinear situation, where the control acts multiplicatively on the state, still requires a dedicated analysis.

The purpose of this paper is to contribute to this theory. Our first main contribution is an existence and continuous-dependence result for abstract bilinear systems on Banach spaces given by 
\begin{equation}\label{eq:bilinear_intro}
    \tag{$\Sigma$} \left\{
        \begin{aligned}
            \dot x(t) &= Ax(t) + B_1F(u_1(t),x(t)) + B_2u_2(t), \quad t\in (0, \infty),\\
            x(0) &= x_0 \in X,
        \end{aligned} \right.
    \end{equation}
where $A$ is the generator of a $C_0$-semigroup on a~Banach space $X$,
$X_{-1}$ denotes the associated extrapolation space,
$B_1\in\mathcal L(Z,X_{-1})$ and $B_2\in\mathcal L(U_2,X_{-1})$ are
admissible control operators, and $F\colon U_1\times X\to Z$ is bounded and
bilinear. Our examples show that this framework also covers certain control
operators that are unbounded as operators into $X$.

This result provides a well-posedness framework for an abstract class of non-autonomous evolution equations with bilinear control terms. The Banach-space setting is useful because it covers many evolution equations that do not naturally fit into a Hilbert space framework.
Our second main contribution concerns passivity. Using the well-posedness result, we prove passivity properties for a class of bilinear systems with admissible control operators. For this part, we work in Hilbert spaces, where the energy identity and the duality between input and output spaces can be formulated in a natural way. This restriction is also consistent with most of the existing literature on passive infinite-dimensional systems. Passivity in Banach spaces is more delicate, and only partial results are currently available; see, for example, \cite{Iftime05,Reis2021}.

The results of this paper show that the bilinear structure can be incorporated into the passivity framework without losing well-posedness, provided suitable admissibility and energy-balance assumptions are imposed. In particular, the theory developed here gives a basis for passivity-based feedback design and stabilization of infinite-dimensional bilinear systems. It also complements existing results on finite-dimensional bilinear systems, time-varying passive systems, and port-Hamiltonian systems with boundary control.
The paper is organized as follows. In Section~\ref{sec:well-posedness} we show the well-posedness result for bilinear systems on Banach spaces. In Section~\ref{sec:passive}, we specialize to Hilbert spaces and prove the main passivity theorem for systems with admissible control operators. In Section~\ref{sec:applications} we apply the passivity results to several examples including a bilinear Schrödinger equation, a~Fokker--Planck equation and a district heating pipe. Finally, Section~\ref{sec:conclusion} summarizes the results and indicates possible directions for future research.

\subsection*{Notation}

The space of bounded linear operators $T:X\rightarrow Y$ between two normed spaces $X$ and $Y$ is denoted by $\mathcal{L}(X,Y)$ and if $X=Y$ we abbreviate this set by $\mathcal{L}(X)$. For a linear operator $T$, we denote its domain by $D(T)$ and its resolvent set by $\rho(T)$. The topological dual space of $X$ is denoted by $X^*$ and the dual pairing between $X^*$ and $X$ is $\langle \cdot, \cdot \rangle_{X^*,X}$. If $X$ is a Hilbert space, the dual pairing can be identified with the inner product $\langle \cdot , \cdot \rangle_X$ via the Riesz isomorphism. 
For an interval $I\subseteq \RR$ and $p \in [1,\infty)$ we denote the standard Lebesgue and Sobolev spaces by
\begin{align*}
L^p(I;X) &\coloneqq \left\{f: I \to X \text{ measurable }\middle|\, \int_I  \|f(t)\|_X^p \d t < \infty \right\},\\
L^p_{\loc}(I;X) &\coloneqq \left\{f: I \to X \text{ measurable } \middle|\, \int_K \|f(t)\|_X^p \d t < \infty \text{ for all compact }K \subseteq I\right\},\\
W^{1,p}_{\loc}(I;X) &\coloneqq \left\{f \in L^p_{\loc}(I;X) \mid f \text{ has weak derivative } \dot f \in L^p_{\loc}(I;X) \right\}.
\end{align*}
Furthermore, for $p=2$ we write $H^1(I;X)$ and $H_{\loc}^1(I;X)$ instead of $W^{1,2}(I;X)$ and $W^{1,2}_{\loc}(I;X)$, respectively. 
Moreover, $C(I;X)$ is the Banach space of all continuous $X$-valued functions endowed with the supremum norm 
   $ \nm[{C(I;X)}]{f}= \sup_{t\in I} \nm[X]{f(t)}$ 
and $C^1(I;X)$ is the subspace of all continuously differentiable $X$-valued functions. If $X\in \{\RR,\CC\}$ we omit the dependence on $X$ in all of the spaces introduced above.

Let $A$ be the generator of a $C_0$-semigroup $(T(t))_{t \geq 0}$ on $X$. The extrapolation space $X_{-1}$ associated with $A$ is defined as the completion of $X$ with respect to the norm $\norm{ \cdot }_{X_{-1}} \coloneqq \norm{(\lambda - A)^{-1} \,\cdot\, }_X$ for some $\lambda \in \rho(A)$. The resolvent identity yields that different choices of $\lambda \in \rho(A)$ give rise to equivalent norms. It is well-known that $X$ is continuously and densely embedded into $X_{-1}$, $(T(t))_{t \geq 0}$ uniquely extends to a $C_0$-semigroup $(T_{-1}(t))_{t \geq 0}$ on $X_{-1}$ whose generator $A_{-1}$ with $D(A_{-1})=X$ is the extension of $A$ in $X_{-1}$.

\section{Solution theory and continuous dependence on data}
\label{sec:well-posedness}

Consider the following abstract bilinear control system
\begin{equation}\label{eq:bilinear}
    \tag{$\Sigma$} \left\{
        \begin{aligned}
            \dot x(t) &= Ax(t) + B_1F(u_1(t),x(t)) + B_2u_2(t), \quad t\in (0, \infty),\\
            x(0) &= x_0 \in X,
        \end{aligned} \right.
    \end{equation}
 where $X$, $U_1$, $U_2$ and $Z$ are Banach spaces, $A$ is the generator of a $C_0$-semigroup $(T(t))_{t\geq 0}$ on $X$, and the control operators $B_1, B_2$ satisfy $B_1 \in \mathcal{L}(Z,X_{-1})$ and $B_2 \in \mathcal L(U_2, X_{-1})$, where $X_{-1}$ is the extrapolation space associated with $A$. Finally, we assume that $F \colon U_1 \times X \to Z$ is bilinear and bounded, i.e. there exists a constant $C>0$ such that
 \begin{equation}\label{eq:F_bdd}
    \norm{F(u_1,x)}_Z \leq C \norm{u_1}_{U_1} \norm{x}_X
 \end{equation}
 holds for all $u_1 \in U_1$ and $x \in X$. 
 Typical examples of $F$ are given by
 \begin{enumerate}[(i)]
     \item $F(u_1,x) = u_1x = \begin{bmatrix} \tilde{u}_1 x & \dots & \tilde{u}_m x\end{bmatrix}^\top$, where $u_1= \begin{bmatrix} \tilde{u}_1 & \dots & \tilde{u}_m \end{bmatrix}^\top \in U_1=\CC^m$ and $Z=X^m$. In this case, we have $B_1= \begin{bmatrix} \tilde B_1 & \dots & \tilde B_m \end{bmatrix}$ with each $\tilde B_i \in \mathcal{L}(X,X_{-1})$ and
     \begin{equation}\label{eq:example_bilinear_sum}
         B_1F(u_1,x) = \sum_{i=1}^m \tilde{u}_i \tilde{B}_i x,
     \end{equation}
     which leads to the prototypical class of bilinear control systems mentioned in the introduction.
     \item $F(u_1,x) = \inprod{u_1}{x}_{X^*,X} z$, where $U_1=X^*$ and $z \in Z$ is a fixed element in some Banach space $Z$.

     \item $F(u_1,x) = \inprod{x^*}{x}_{X^*,X} u_1$, where $U_1=Z$ and $x^* \in X^*$ is fixed.
 \end{enumerate}
The arguments below also cover continuous sesquilinear mappings $F$.
Thus, if $X$ is a Hilbert space, the inner product may be used instead of the
dual pairing in the examples above. For simplicity, we present the results for
bilinear mappings.

Note that $B_1$ and $B_2$ may be unbounded with respect to the $X$-norm, which is typically the case if \eqref{eq:bilinear} arises from a boundary control system, cf.\ \cite[Ch.~10]{MR2502023} for linear systems.
In this sense, we refer to $B_1$ and $B_2$ as unbounded, and we call them bounded if $B_1 \in \mathcal{L}(Z,X)$ and $B_2 \in \mathcal{L}(U_2,X)$, respectively.

Regarding the existence of (classical) solutions, the unboundedness of $B_1$ and $B_2$ plays a critical role.
Even for linear systems with unbounded control operators, the existence and regularity of solutions are nontrivial issues. They are intimately linked to the notion of admissible control operators, see \cite[Ch.~4]{MR2502023}, which we briefly recall below.

\begin{definition}
   Let $X,Z$ be Banach spaces. We call $B \in \mathcal{L}(Z, X_{-1})$ an \emph{$L^p$-admissible control operator} for a $C_0$-semigroup $(T(t))_{t \geq 0}$ (or just \emph{$L^p$-admissible}) for $p \in [1, \infty)$ if there exists $t > 0$ such that, for all $v \in L^p([0,\infty);Z)$,
    \begin{equation}\label{eq:inputmap}
     \Phi_t v \coloneqq \int_0^t T_{-1}(t-s)Bv(s)  \ds \in X.   
    \end{equation}
\end{definition}

\begin{remark}\label{rem:admissibility}
\begin{enumerate}[(i)]
    \item
    It is well-known, see e.g.~\cite[Prop.~4.2.2~\&~Prop.~4.2.4]{MR2502023} for $p=2$, that if $B$ is $L^p$-admissible, then $\Phi_t \in \mathcal{L}(L^p([0, \infty);Z),X)$ holds for all $t \geq 0$, and the \emph{admissibility constants}
    \begin{equation*}
        K_t \coloneqq \nm[\mathcal{L}(L^p([0, \infty); Z), X)]{\Phi_t}
    \end{equation*}
    are non-decreasing with respect to $t\geq 0$. Moreover, since $\Phi_t v$ only depends on the values of $v$ on $[0,t]$, we have that $\Phi_t v$ is well-defined for $v \in L^p_{\loc}([0,\infty);Z)$ and
    \begin{equation*}
        \norm{\Phi_t v}_X \leq K_t \norm{v}_{L^p([0,t];Z)}. 
    \end{equation*}
\item The map $(t,v) \mapsto \Phi_t v$ is continuous from $[0,\infty) \times L^p_{\loc}([0,\infty);Z)$ to $X$.

\item    Every bounded $B \in \mathcal L(Z,X)$ is $L^p$-admissible for all $p \in [1,\infty)$.
    \end{enumerate}
\end{remark}

We are interested in the following standard solution concepts.

\begin{definition}
    Let $x_0 \in X$, $u_1 \in L_{\loc}^p([0, \infty); U_1)$, and $u_2 \in L_{\loc}^p([0, \infty); U_2)$.
    \begin{enumerate}[\rm (i)]
        \item A function $x \in C([0,\infty);X)$ is called a \emph{(global) mild solution of \eqref{eq:bilinear}} if, for all $t \geq 0$,
    \begin{equation*}
         x(t) = T(t)x_0 + \int_{0}^{t} T_{-1}(t - s)(B_1F(u_1(s),x(s)) + B_2u_2(s)) \ds.
    \end{equation*}

    \item A function $x\in C^1([0,\infty);X)$ is called a \emph{(global) classical solution of \eqref{eq:bilinear}} if $x(0)=x_0$ and, for almost every $t \geq 0$,
    \begin{equation}\label{eq:extrapolated_bilinear}
         \dot x(t) = A_{-1}x(t) + B_1F(u_1(t),x(t)) + B_2u_2(t).
    \end{equation}
    \end{enumerate}
\end{definition}

The abstract system class \eqref{eq:bilinear} is discussed in
\cite{MR4440806} in the context of input-to-state stability under slightly more general assumptions on $F$. The following existence result of mild solutions was shown.

\begin{lemma}{\cite[Lem.~2.8~\&~Rem.~2.10]{MR4440806}}\label{lm:bilinear_mild}
Let $X, U_1, U_2, Z$ be Banach spaces, $A$ be the generator of a $C_0$-semigroup on $X$, $B_1 \in \mathcal L(Z,X_{-1})$ be $L^{p_1}$-admissible and $B_2 \in \mathcal L(U_2, X_{-1})$ be $L^{p_2}$-admissible for some $p_1, p_2\in[1,\infty)$, and let $F\colon U_1 \times X \to Z$ be bilinear and bounded. Then, for every $x_0 \in X, u_1\in L^{p_1}_{\loc}([0, \infty);U_1), u_2 \in L^{p_2}_{\loc}([0, \infty);U_2)$, the system \eqref{eq:bilinear} admits a unique global mild solution.
\end{lemma}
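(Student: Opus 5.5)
The plan is a Banach fixed-point argument on short time intervals, followed by concatenation. Fix $x_0$, $u_1$, $u_2$ and a horizon $\tau>0$. For $y\in C([0,\tau];X)$ define
\[
(\Gamma y)(t) \coloneqq T(t)x_0 + \Phi^{1}_t\bigl(F(u_1(\cdot),y(\cdot))\bigr) + \Phi^{2}_t u_2, \qquad t\in[0,\tau],
\]
where $\Phi^{1}_t$ and $\Phi^{2}_t$ are the input maps \eqref{eq:inputmap} associated with $B_1$ and $B_2$.

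\emph{Well-definedness.} The function $s\mapsto F(u_1(s),y(s))$ is strongly measurable, since $F$ is bounded bilinear and both arguments are measurable. By \eqref{eq:F_bdd},
\[
\norm{F(u_1(s),y(s))}_Z\le C\norm{u_1(s)}_{U_1}\norm{y}_{C([0,\tau];X)},
\]
so this function lies in $L^{p_1}([0,\tau];Z)$. Hence $\Phi^1_t$ maps it into $X$ by admissibility. Continuity of $\Gamma y$ in $t$ then follows from the strong continuity of the semigroup together with Remark~\ref{rem:admissibility}(ii). Thus $\Gamma$ maps $C([0,\tau];X)$ into itself.

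\emph{Contraction.} For $y,\tilde y$, Remark~\ref{rem:admissibility}(i) and bilinearity give
\[
\norm{(\Gamma y)(t)-(\Gamma\tilde y)(t)}_X\le K^1_\tau\, C\,\norm{u_1}_{L^{p_1}([0,\tau];U_1)}\,\norm{y-\tilde y}_{C([0,\tau];X)}.
\]
Since $\norm{u_1}_{L^{p_1}([0,\tau])}\to 0$ as $\tau\to0$ (as $p_1<\infty$) and $K^1_\tau$ is non-decreasing, we choose $\tau$ small enough that the factor is below $1/2$. Banach's fixed point theorem then yields a unique mild solution on $[0,\tau]$.

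\emph{Extension to $[0,\infty)$.} This is the main obstacle, because the step size must not shrink. On an arbitrary interval $[a,a+\tau]$ the contraction constant is $K^1_\tau C\norm{u_1}_{L^{p_1}([a,a+\tau])}$, using shift invariance of the admissibility constants. On any compact $[0,T_0]$ the map $a\mapsto\norm{u_1}_{L^{p_1}([a,a+\tau])}$ tends to $0$ uniformly in $a$ as $\tau\to0$, by absolute continuity of the integral of $\norm{u_1}^{p_1}$. So a uniform $\tau$ works on $[0,T_0]$. Restarting from $x(a)$ with the shifted inputs, the semigroup property shows that the concatenation satisfies the mild formula on the whole interval; this uses $T_{-1}(t-a)\Phi_a v=\int_0^a T_{-1}(t-s)Bv(s)\ds$. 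Taking $T_0\to\infty$ gives a global solution.

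\emph{Uniqueness.} Two mild solutions agree on each subinterval by uniqueness of the fixed point, so by induction they agree on all of $[0,\infty)$.

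An alternative that avoids stepping is to use the weighted norm $\sup_t e^{-\omega t}\norm{y(t)}$. However, with $L^{p_1}$ inputs this is less clean, so I would stick with the uniform short-step argument.
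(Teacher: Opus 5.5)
Your proof is correct. The paper does not prove this lemma itself; it quotes it from \cite{MR4440806}. The natural comparison is therefore with the paper's own existence argument for the auxiliary problem in Steps~1--2 of the proof of Theorem~\ref{thm:classical_solutions}, together with the Gronwall estimate of Lemma~\ref{lem:x_bdd_by_data}. There, the contraction is set up on a ball $S$ whose radius depends on $x_0$ and $w_0$. Global existence then goes through a maximal existence time and a blow-up alternative, which is excluded by a Gronwall bound (using Lemma~\ref{lem:equiv_Lp_norm} to handle $p_1>1$).

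You instead use the fact that, for fixed $u_1$, your map $\Gamma$ is affine in $y$. Its linear part has norm at most $K^1_\tau C\|u_1\|_{L^{p_1}([a,a+\tau];U_1)}$, independent of $x_0$, $u_2$ and the current state. Hence the contraction holds on all of $C([a,a+\tau];X)$, a uniform step on compacts follows from absolute continuity of the integral alone, and no a priori bound or blow-up argument is needed. As a bonus, uniqueness holds among all continuous mild solutions, not only within a ball. In fact the paper's $\delta$ in Step~1 is already independent of $x_0$ and $w_0$, for exactly this reason, but the argument is still phrased via $S$ and a blow-up alternative.

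What the Gronwall route buys is the explicit bound \eqref{est:x_bdd_by_data} and the estimates behind Proposition~\ref{pr:continuous_dependence}, which the paper needs anyway. It would also survive replacing the bilinear term by a merely locally Lipschitz nonlinearity, where a whole-space contraction is unavailable.

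One point to make explicit: for uniqueness beyond the first step you need the converse of your concatenation identity. That is, a global mild solution must satisfy the restarted mild formula on $[a,\infty)$ with initial value $x(a)$. This follows from the same identity $T_{-1}(t-a)\Phi_a v=\int_0^a T_{-1}(t-s)Bv(s)\,\mathrm{d}s$, and the resulting uniqueness also makes the solutions constructed for different $T_0$ compatible.
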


\subsection{Classical solutions}

In the following, we extend Lemma~\ref{lm:bilinear_mild} to classical solutions. We will use the following simple fact, obtained by passing to the equivalent $L^p$-norm given by duality.

\begin{lemma}\label{lem:equiv_Lp_norm}
    Let $I \subset \RR$ be an interval, $f \in L^p(I)$, $p \in [1,\infty)$, and $q$ be the H\"older conjugate to $p$, i.e. $\frac{1}{p} + \frac{1}{q}=1$. Then, for every $\epsilon>0$, there exists $g_\epsilon \in L^q(I)$ with $\norm{g_\epsilon}_{L^q} \leq 1$ such that
    \begin{equation*}
        \norm{f}_{L^p(I)} = \sup_{\norm{g}_{L^q(I)} \leq 1} \norm{fg}_{L^1(I)} \le \norm{fg_\epsilon}_{L^1(I)} + \epsilon. 
    \end{equation*}
\end{lemma}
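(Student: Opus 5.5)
We prove the identity and the approximation separately, since the second claim follows from the first by the definition of the supremum.

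\emph{Upper bound.} For every $g \in L^q(I)$ with $\norm{g}_{L^q(I)} \le 1$, H\"older's inequality gives
\[
\norm{fg}_{L^1(I)} \le \norm{f}_{L^p(I)}\norm{g}_{L^q(I)} \le \norm{f}_{L^p(I)}.
\]
Hence the supremum is at most $\norm{f}_{L^p(I)}$.

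\emph{Attainment.} We show the supremum is attained by an explicit extremal $g_0$. If $\norm{f}_{L^p(I)}=0$, take $g_0=0$.

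For $p\in(1,\infty)$ and $f\neq 0$, set
\[
g_0 \coloneqq \frac{|f|^{p-1}}{\norm{f}_{L^p(I)}^{p-1}}.
\]
Since $(p-1)q=p$, we get $\norm{g_0}_{L^q(I)}^q = \int_I |f|^p \,\d t\,/\,\norm{f}_{L^p(I)}^{p} = 1$. Moreover,
\[
\norm{fg_0}_{L^1(I)} = \frac{\int_I |f|^p \,\d t}{\norm{f}_{L^p(I)}^{p-1}} = \norm{f}_{L^p(I)}.
\]

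For $p=1$ we have $q=\infty$, and $g_0\equiv 1$ satisfies $\norm{g_0}_{L^\infty}\le 1$ and $\norm{fg_0}_{L^1(I)}=\norm{f}_{L^1(I)}$.

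In both cases the supremum equals $\norm{f}_{L^p(I)}$ and is attained.

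\emph{Conclusion.} Choosing $g_\epsilon \coloneqq g_0$ gives the asserted inequality for every $\epsilon>0$, in fact with equality and without the $\epsilon$. Even without an explicit maximizer, the definition of the supremum directly yields a $g_\epsilon$ in the unit ball with $\norm{fg_\epsilon}_{L^1(I)} \ge \sup - \epsilon$.

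No step here is a genuine obstacle. The only points needing care are the exponent bookkeeping $(p-1)q=p$ and the separate treatment of the cases $p=1$ and $f=0$. The lemma presumably appears in $\epsilon$-form because it is later applied to norms of vector-valued quantities, where an exact maximizer may be less convenient.
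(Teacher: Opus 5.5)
Your proof is correct and takes the same route the paper indicates. The paper gives no separate argument: it invokes the duality characterization of the $L^p$-norm (H\"older for the upper bound, a norming function for attainment) and then obtains the $\epsilon$-statement from the definition of the supremum. Your explicit extremal $g_0=|f|^{p-1}/\norm{f}_{L^p(I)}^{p-1}$ (respectively $g_0\equiv 1$ for $p=1$) makes this self-contained and also shows that the $\epsilon$ can be dropped.
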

The next theorem is our main result on classical solutions of bilinear systems.
\begin{theorem}\label{thm:classical_solutions}
Under the assumptions of Lemma~\ref{lm:bilinear_mild}, let
\[
    x_0\in X,\qquad
    u_1\in W^{1,p_1}_{\loc}([0,\infty);U_1),\qquad
    u_2\in W^{1,p_2}_{\loc}([0,\infty);U_2)
\]
satisfy the compatibility condition
\[
    A_{-1}x_0+B_1F(u_1(0),x_0)+B_2u_2(0)\in X .
\]
Then, the unique global mild solution of \eqref{eq:bilinear} is a global
classical solution. In particular, $x\in C^1([0,\infty);X)$ and
\[
    \dot x(t)
    =
    A_{-1}x(t)+B_1F(u_1(t),x(t))+B_2u_2(t)
\]
holds for almost every $t\ge0$ in $X_{-1}$.
\end{theorem}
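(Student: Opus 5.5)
We follow the classical route for linear systems with admissible control operators, where compatible $W^{1,p}$ inputs yield $C^1$ solutions: we show that the formal derivative $y=\dot x$ is itself the mild solution of a differentiated system. Let $x$ be the mild solution from Lemma~\ref{lm:bilinear_mild}. Set
\[
y_0 \coloneqq A_{-1}x_0+B_1F(u_1(0),x_0)+B_2u_2(0)\in X,
\]
which lies in $X$ by the compatibility condition. Let $y\in C([0,\infty);X)$ be the unique solution of the linear Volterra-type equation
\[
y(t)=T(t)y_0+\int_0^t T_{-1}(t-s)\Big(B_1F(u_1(s),y(s))+B_1F(\dot u_1(s),x(s))+B_2\dot u_2(s)\Big)\ds .
\]
This is again a system of the form \eqref{eq:bilinear} with state $y$ and bilinear input $u_1\in L^{p_1}_{\loc}$. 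The additional forcing $v(s)=F(\dot u_1(s),x(s))\in L^{p_1}_{\loc}([0,\infty);Z)$ is admissible through $B_1$, because $x$ is continuous, hence locally bounded, and \eqref{eq:F_bdd} gives $\|v(s)\|_Z\le C\|\dot u_1(s)\|\,\sup_{[0,t]}\|x\|$. Existence and uniqueness of $y$ follow exactly as in Lemma~\ref{lm:bilinear_mild}, by a contraction argument on short intervals using $\|\Phi_t w\|\le K_t\|w\|_{L^{p_1}}$ and then concatenation. Alternatively, one can treat $B_1$ with input $(F(u_1,y)+v)$ via an affine modification of the same fixed-point proof.

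The key step is to show that $x(t)=x_0+\int_0^t y(s)\ds$, from which $x\in C^1([0,\infty);X)$ and $\dot x=y$. Define $w(t)\coloneqq x_0+\int_0^t y(s)\ds\in C^1([0,\infty);X)$. We then show that $w$ is a mild solution of \eqref{eq:bilinear} and conclude $w=x$ by uniqueness in Lemma~\ref{lm:bilinear_mild}. The verification is a computation in $X_{-1}$. Integrate the equation for $y$ in time and use $\int_0^t T(s)y_0\ds = T_{-1}(t)x_0-x_0+\int_0^t T_{-1}(t-s)(B_1F(u_1(0),x_0)+B_2u_2(0))\ds$ in $X_{-1}$, which holds since $A_{-1}\int_0^tT_{-1}(s)z\ds=T_{-1}(t)z-z$. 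Then Fubini applied to the convolution terms, combined with the product rule
\[
\tfrac{\d}{\d s}F(u_1(s),w(s))=F(\dot u_1,w)+F(u_1,y),
\]
identifies $w(t)-T(t)x_0$ with $\int_0^tT_{-1}(t-s)(B_1F(u_1,w)+B_2u_2)\ds$. This requires $y$ to be the derivative of $w$ while the equation for $y$ contains $F(\dot u_1,x)$ rather than $F(\dot u_1,w)$, so the argument must be a joint one. We therefore use the Volterra equation for $e\coloneqq w-x$. Subtracting, $e$ satisfies
\[
e(t)=\int_0^t T_{-1}(t-s)B_1\Big(F(u_1,e)(s)-\text{[term in }\textstyle\int F(\dot u_1,e)\text{]}\Big)\ds ,
\]
with $e(0)=0$. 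A Gronwall-type estimate, $\|e(t)\|\le K_t C\,\|u_1\|_{W^{1,p_1}(0,t)}\,\|e\|_{L^{p_1}\text{ or }\infty}$, on short intervals then gives $e\equiv0$. Once $\dot x=y$ is established, differentiating the mild formula in $X_{-1}$ (where $x\in C^1$ and $A_{-1}\in\mathcal L(X,X_{-1})$) yields the stated equation almost everywhere.

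The main obstacle is the rigorous justification of this identification step. The product rule for $F(u_1,w)$ with $u_1$ only $W^{1,p_1}$ requires approximation by smooth $u_1$. Exchanging integrals involving $T_{-1}$ and $B_1$ is legitimate only in $X_{-1}$, and the conclusion must then be lifted back into $X$ via admissibility. For this step we use Lemma~\ref{lem:equiv_Lp_norm} to control the $L^{p_1}$-norms of $s\mapsto\|\dot u_1(s)\|\,\|e(s)\|$ by duality pairings. This yields the smallness estimate $\sup_{[0,\tau]}\|e\|\le \tfrac12\sup_{[0,\tau]}\|e\|$ for $\tau$ small, with $\tau$ uniform by absolute continuity of $\|\dot u_1\|^{p_1}$. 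We then iterate over consecutive intervals to cover $[0,\infty)$.
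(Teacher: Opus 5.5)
Your proof is correct. Its skeleton is the paper's: formally differentiate, solve the derivative equation by a contraction, integrate it, verify the mild formula via the product rule, integration by parts and Fubini, and conclude by uniqueness. You distribute the work differently, however.

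The paper's auxiliary equation contains the memory term $F\bigl(\dot u_1(t),x_0+\int_0^t w(s)\,\mathrm{d}s\bigr)$ rather than the known mild solution. So its contraction (Step~1) and its global extension (Step~2) must handle this memory term, using Gronwall, Fubini and Lemma~\ref{lem:equiv_Lp_norm}. In return, $v=x_0+\int_0^\cdot w$ then satisfies the mild formula of \eqref{eq:bilinear} exactly, with no remainder.

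You instead freeze the cross term as $F(\dot u_1,x)$. This makes the $y$-equation memoryless and affine in $y$, an instance of \eqref{eq:bilinear} with one extra admissible forcing. Its global solvability is therefore immediate: contraction intervals have uniform length and no blow-up alternative is needed. The price is that $x_0+\int_0^\cdot y$ is no longer automatically a mild solution. The same computation leaves, for $e=x_0+\int_0^\cdot y-x$,
\[
e(t)=\int_0^t T_{-1}(t-s)B_1\Bigl(F(u_1(s),e(s))-\int_0^s F(\dot u_1(r),e(r))\,\mathrm{d}r\Bigr)\,\mathrm{d}s .
\]
This is the precise form of your schematic equation, obtained by rewriting the double integral $\int_0^t\int_0^sT_{-1}(s-r)B_1F(\dot u_1(r),e(r))\,\mathrm{d}r\,\mathrm{d}s$ with Fubini.

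Bounding the inner integral by $C\|\dot u_1\|_{L^1(0,\tau)}\sup_{[0,\tau]}\|e\|$ gives your smallness estimate directly in the sup-norm, so Lemma~\ref{lem:equiv_Lp_norm} is not actually needed there. Your iteration is legitimate: once $e\equiv0$ on $[0,k\tau]$, both the convolution and the memory integral start at $k\tau$, so the same estimate applies on the next interval.

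Thus the same memory-term estimate appears in both proofs. The paper uses it in the existence step; you use it in the identification step. Your version has the advantage that the auxiliary problem needs no well-posedness analysis beyond that of Lemma~\ref{lm:bilinear_mild}.
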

\begin{proof}
    We prove the claim in several steps. 
    
     \emph{Step 1:} Let $x_0, u_1, u_2$ be as in the theorem and let $w_0 \in X$. 
     We first show that for every $t_0\geq 0$ there exists $t_e>t_0$ such that the system
     \begin{equation}\label{eq:aux_derivative_solution}
        \left\{\begin{aligned}
            \dot{w}(t) &= Aw(t) + B_1F(u_1(t),w(t)) + B_1F\left(\dot{u}_1(t), x_0 + \int_{t_0}^t w(s) \ds\right) + B_2\dot u_2(t) ,\quad t \in (t_0,t_e),\\
        w(t_0)&=w_0
        \end{aligned}\right.
    \end{equation}
    has a unique mild solution $w\in C([t_0, t_e];X)$ on $[t_0, t_e]$, that is, for $t \in [t_0,t_e]$, we have
    \begin{equation}\label{eq:int_formula_aux_prb}
        w(t) = T(t-t_0)w_0 + \int_{t_0}^{t} T_{-1}(t-s)\left(B_1F(u_1(s),w(s)) + B_1F\left(\dot u_1(s),x_0 + \int_{t_0}^s w(r)\d r\right) +B_2 \dot u_2(s)\right)\ds.
    \end{equation}
    
    Moreover, we show that $t_e=t_0+\delta$ can be chosen such that $\delta > 0$ depends only on bounded sets containing $x_0$, $w_0$ and $t_0$, respectively, and not on the particular choices $x_0, w_0$ and $t_0$.

    Let $T>0$ and consider $t_0 \in [0,T]$, and let $r > 0$ such that $\nm[X]{x_0}, \nm[X]{w_0}\leq r$. 
    Let $M\ge1,\omega \ge 0$ such that $\norm{T(t)} \le Me^{\omega t}$ for all $t\ge 0$ and define
  \[
  k \coloneqq  3(r+1)Me^{\omega}.
  \]

    By the absolute continuity of the $L^p$-norm with respect to the Lebesgue measure, there exists a $\delta \in (0, 1)$, independent of $t_0$, such that 
\begin{equation}
    K_{T+1}\max\left\{C\left(\norm{u_1}_{L^{p_1}([t_0, t_0+\delta];U_1)} + \norm{\dot u_1}_{L^{p_1}([t_0, t_0+\delta];U_1)}\right), \norm{\dot u_2}_{L^{p_2}([t_0, t_0+\delta];U_2)}\right\} \leq \frac{1}{4}
\end{equation}    
    holds, where $K_{T+1} \coloneqq \max  \{K_{B_1,T+1}, K_{B_2,T+1}\}$ with $K_{B_1,t}, K_{B_2,t}$ being the admissibility constants of $B_1$ and $B_2$ from Remark~\ref{rem:admissibility}, respectively, and $C$ is the constant from \eqref{eq:F_bdd}.
    Define $t_e \coloneqq t_0+\delta$, 
    \begin{equation*}
        S\coloneqq \{x \in C([t_0, t_e];X) \mid \norm x_{C([t_0, t_e]; X)} \le k\},
    \end{equation*}
    and the map $\mathcal T: S \to S$ by defining $\mathcal T(w)(t)$ as the right-hand side of \eqref{eq:int_formula_aux_prb}.
    
    Note that $\mathcal T$ is well-defined. Indeed, the first term on the right-hand side of \eqref{eq:int_formula_aux_prb} is clearly continuous in $t$.
    Since $u_1\in W^{1,{p_1}}_{\loc}([0,\infty);U_1)$, the continuity of $F$ and $w$ implies that $F(\dot u_1,x_0), F(u_1,w), F(\dot u_1, \int_0^{\cdot}w(s)\ds) \in L^{p_1}([t_0,t_e];Z)$. Thus, the remaining terms on the right-hand side of \eqref{eq:int_formula_aux_prb} are continuous with respect to $t$ by the admissibility of $B_1$ and $B_2$, see Remark~\ref{rem:admissibility}.   
    Moreover, for all $t \in [t_0, t_e]$ we have using $k \ge 3, 3r$
    \begin{align*}
        \norm{\mathcal T(w)(t)} 
        &\le Me^{\omega}\norm{w_0}_X + K_{T+1}C \left(\norm{u_1}_{L^{p_1}([t_0, t_e];U_1)} + \norm{\dot u_1}_{L^{p_1}([t_0, t_e];U_1) } \right)\left(\norm{x_0}_X + \norm{w}_{C([t_0,t_e];X)}\right)\\
        & \hspace{1em}+K_{T+1} \norm{\dot u_2}_{L^{p_2}([t_0, t_e];U_2)} \\
        &\le \frac{k}{3} + \frac14\left(\frac k3 + k\right) + \frac k3= k.
    \end{align*}
    Here we used the admissibility inequality for $B_1$ and $B_2$, the boundedness of $F$ and the estimate $\norm{\int_{t_0}^s w(r) \mathrm{d}r} \leq \norm{w}_{C([t_0,t_e];X)}$ for all $s \in [t_0,t_e]$, which follows from $\delta<1$.
    
    We also have that $\mathcal T$ is a contraction as for $w,\tilde w \in S$ a similar estimate as before yields
    \begin{align*}
        \norm{\mathcal T(w) - \mathcal T(\tilde w)} &\le K_{T+1}C \left(\norm {u_1}_{L^{p_1}([t_0, t_e]; U_1)} + \norm{\dot u_1}_{L^{p_1}([t_0, t_e]; U_1)}\right)\norm{w - \tilde w}_{C([t_0,t_e];X)} \\
        &\leq \frac{1}{4} \norm{w - \tilde w}_{C([t_0,t_e];X)}.
    \end{align*}
    By Banach's fixed point theorem, $\mathcal T$ has a unique fixed point, which is the unique mild solution of \eqref{eq:aux_derivative_solution} on $[t_0,t_e]$.

    \emph{Step 2:} We now show that the solution $w$ has a unique global extension.
    Let $ t_{\max} $ be the supremum over all $ t_e > 0 $ such that there exists a unique mild solution $w$ of \eqref{eq:aux_derivative_solution} on $[t_0, t_e]$, where $x_0, w_0 \in X$ and $u_1\in W^{1,{p_1}}_{\loc}([t_0,\infty);U_1), u_2\in W^{1,{p_2}}_{\loc}([t_0,\infty);U_2)$ are given.
    Suppose that $ t_{\max} $ is finite. We will show that $\limsup_{t \to t_{\max}} \|w(t)\|_X = \infty$. If this is not the case, we have
    \[
    r \coloneqq \sup_{t \in [t_0, t_{\max})} \|w(t)\|_X < \infty.
    \]
    Let $(t_n)_{n \in \mathbb{N}}$ be a sequence of positive real numbers converging to $ t_{\max} $ from below. Since $ t_n \in [t_0, t_{\max}] $ and $\|w(t_n)\| \leq r $ for all $ n \in \mathbb{N} $, Step 1 guarantees the existence of some $\delta > 0 $ independent of $x_0, w_0$ and $n \in \mathbb{N}$ 
    such that the system
    \begin{equation*}
        \begin{cases}
            \dot{z}(t) = Az(t) + B_1F(u_1(t),z(t)) +B_1F\left(\dot u_1(t), x(t_n) + \int_{t_n}^t z(s)\ds\right) +B_2\dot u_2(t), \\
            z(t_n) = w(t_n)
        \end{cases}
    \end{equation*}
    has a unique mild solution $ z $ on $[t_n, t_n + \delta]$. Therefore, for $ n \in \mathbb{N} $ with $ t_n + \delta > t_{\max} $, we can extend $ w $ by $ w(t) = z(t) $, $ t \in [t_n, t_n + \delta] $, to a solution of \eqref{eq:aux_derivative_solution} on $[t_0, t_n + \delta]$. This contradicts the maximality of $ t_{\max} $, and hence, $ w $ has to be unbounded in $ t_{\max} $.

    Now, to conclude that $t_{\rm max}=\infty$, it suffices to show that the solution $w$ of \eqref{eq:aux_derivative_solution} remains bounded on every compact subinterval $[t_0, t_e]$ of its maximal interval of existence. Let $t\in [t_0,t_e]$, then we have for any $\epsilon>0$ that
    \begin{align*}
        \norm{w(t)}_X
        &\le Me^{\omega(t-t_0)} \norm{w_0}_X + K_t\norm{\dot u_2}_{L^{p_2}([t_0,t];U_2)}\\
        &\hspace{1em}+ K_t C \left(\norm{ \norm{u_1}_{U_1} \norm{w}_X}_{L^{p_1}([t_0,t])} + \norm{\norm{\dot u_1}_{U_1} \left(\norm{x_0}_X + \int_{t_0}^\cdot \norm{w(s)}_X \ds\right)}_{L^{p_1}([t_0,t])} \right)\\
        &\leq \underbrace{Me^{\omega(t_e-t_0)} \norm{w_0}_X + K_{t_e}C\left(\norm{\dot u_1}_{L^{p_1}([t_0,t_e];U_1)}\norm{x_0}_X + 2\epsilon\right) + K_{t_e}\norm{\dot u_2}_{L^{p_2}([t_0,t_e];U_2)}}_{\eqcolon a} \\
        &\hspace{1em}+ K_{t_e}C \left(\norm{\norm{u_1}_{U_1} g_\epsilon \norm{w}_X}_{L^1([t_0,t_e])} + \norm{ \norm{\dot u_1}_{U_1} h_\epsilon \int_{t_0}^\cdot \norm{w(s)}_X \ds}_{L^1([t_0,t_e])} \right)\\
        &= a + K_{t_e}C \left(\norm{\norm{u_1}_{U_1} g_\epsilon \norm{w}_X}_{L^1([t_0,t_e])} + \norm{\norm{w}_X\int^{t_e}_\cdot  \norm{\dot u_1(s)}_{U_1} h_\epsilon(s) \ds}_{L^1([t_0,t_e])} \right),
    \end{align*}
    where $q_1$ is the Hölder conjugate to $p_1$  and $g_\epsilon, h_\epsilon \in L^{q_1}([t_0, t_e])$ with $\norm{g_\epsilon}_{L^{q_1}([t_0, t_e])},$ $\norm{h_\epsilon}_{L^{q_1}([t_0, t_e])} \le 1$ are chosen according to Lemma~\ref{lem:equiv_Lp_norm}. Moreover, we applied Fubini's theorem to obtain the last term.
 
    By Gronwall's inequality, we obtain
    \begin{align*}
        \norm{w(t)} 
        &\le a \exp\left(K_{t_e}C \norm{u_1g_\epsilon}_{L^1([t_0, t_e];U_1)} + K_{t_e}C\norm{\int_{\cdot}^{t_e} \norm{\dot u_1(s) h_\epsilon(s)}_{U_1} \ds}_{L^1([t_0,t_e])} \right)\\
        &\le a \exp\left(K_{t_e}C \norm{u_1g_\epsilon}_{L^1([t_0,t_e];U_1)} + K_{t_e}C(t_e-t_0)\norm{\dot u_1 h_\epsilon}_{L^1([t_0,t_e];U_1)}\right)\\
        &\le a \exp\left(K_{t_e}C\norm{u_1}_{L^{p_1}([t_0,t_e];U_1)} + K_{t_e}C(t_e-t_0)\norm{\dot u_1}_{L^{p_1}([t_0,t_e];U_1)}\right). 
    \end{align*}
    
    \emph{Step 3:} Let $x_0, u_1, u_2$ be as in the theorem and let $x$ be the associated mild solution of \eqref{eq:bilinear}. Let $w \in C([0,\infty);X)$ be the solution of \eqref{eq:aux_derivative_solution} for $w(0)=w_0 \coloneqq A_{-1}x_0 + B_1F(u_1(0),x_0) + B_2u_2(0) \in X$. We will show that $v \coloneqq x_0 + \int_{0}^{\cdot} w(s) \d s \in C^1([0, \infty);X)$ is a mild solution of \eqref{eq:bilinear}.
    Since, for all $z \in X$, we have $\int_a^b T_{-1}(\tau) z \d \tau \in X$ and
    \begin{equation}\label{eq:aux_sg_int}
        A_{-1} \int_r^t T_{-1}(s-r)z \d s = T(t-r)z-z,
    \end{equation}
    we obtain
    \begin{equation}\label{eq:identity_start_v}
        \begin{aligned}
            x_0 + \int_{0}^t T(s)w_0\ds 
            &= x_0 + \int_{0}^t A_{-1}T_{-1}(s)x_0 \ds + \int_{0}^t T_{-1}(s) \left(B_1F(u_1(0),x_0) + B_2u_2(0)\right) \ds\\
            &= T(t)x_0 + \int_{0}^t T_{-1}(s) \left(B_1F(u_1(0),x_0) + B_2u_2(0) \right)\ds.
        \end{aligned}
    \end{equation}
    Inserting the representation of $w$ given by \eqref{eq:int_formula_aux_prb} into the definition of $v$ yields
    \begin{equation*}
        v(t) = x_0 \!+\! \int_{0}^t\! \left(T(s)w_0 +  \int_{0}^s T_{-1}(s-r)\left(B_1F(u_1(r),w(r)) + B_1F(\dot u_1(r),v(r)) + B_2 \dot u_2(r)\right)\!\!\;\d r\!\right)\! \ds.
    \end{equation*}
    Since $F$ is bilinear and bounded, it is Fr\'echet differentiable with Fr\'echet derivative
    \begin{equation*}
        DF(u_1,x)(h_{u_1},h_x) = F(h_{u_1},x) + F(u_1,h_x)
    \end{equation*}
    and the chain rule implies
    \begin{equation*}
    \frac{\mathrm{d}}{\mathrm{d}r} F(u_1(r),v(r)) = F(u_1(r), w(r)) + F(\dot u_1(r), v(r)),
    \end{equation*}
    where we used $\dot v = w$.
    Thus, \eqref{eq:identity_start_v}, integration by parts, Fubini's theorem, and \eqref{eq:aux_sg_int} yields
    \begin{align}\label{eq:x_0+y(t)}
        v(t) &= T(t)x_0 + \int_{0}^t \left(T_{-1}(s) \left(B_1F(u_1(0),x_0) + B_2u_2(0) \right) + \left[T_{-1}(s-r)\left(B_1F(u_1(r),v(r)) + B_2u_2(r)\right)\right]^{r=s}_{r=0} \right. \notag \\
        &\hspace{1em} \left.+ A_{-1}\int_{0}^s T_{-1}(s-r) \left(B_1F(u_1(r),v(r)) + B_2 u_2(r) \right) \d r\right) \mathrm{d}s\notag\\
        &= T(t)x_0 + \int_{0}^t \left(B_1F(u_1(s),v(s)) + B_2u_2(s)\right) \ds\nonumber \\
        &\hspace{1em}+ \int_{0}^t A_{-1}\int_r^t T_{-1}(s-r) \left(B_1F(u_1(r),v(r)) + B_2u_2(r)\right) \ds \d r \notag\\
        &= T(t)x_0 + \int_{0}^t T_{-1}(t-r)\left(B_1F(u_1(r),v(r)) + B_2u_2(r)\right) \d r.
    \end{align}

    This shows that $v$ is the mild solution of \eqref{eq:bilinear}, and by uniqueness, $x = v \in C^1([0, \infty); X)$.
    
    \emph{Step 4:} Finally, differentiating \eqref{eq:x_0+y(t)} yields
    \begin{align*}
        \dot{x}(t) = \dot{v}(t)
        &= A_{-1}\left(T(t)x_0 + \int_{0}^t T_{-1}(t-s)\left(B_1F(u_1(s),v(s)) + B_2u_2(s)\right) \ds\right) \\
        &\hspace{1em}+ B_1F(u_1(t),v(t)) + B_2u_2(t)\\
        &= A_{-1}x(t) + B_1F(u_1(t),x(t)) + B_2u_2(t)
    \end{align*}
    with differentiation in $X$, since the left-hand side is differentiated in $X$. Thus, $x$ is indeed the classical solution to \eqref{eq:bilinear}.
\end{proof}

\begin{remark}
    Consider the situation around \eqref{eq:example_bilinear_sum}.
    It is evident that $B=\begin{bmatrix} \tilde{B}_1 & \dots & \tilde{B}_m \end{bmatrix}$ is $L^p$-admissible if and only if each $\tilde{B}_i$ is $L^p$-admissible. Furthermore, if each $\tilde{B}_i$ is $L^{p_i}$-admissible, then $B$ is $L^p$-admissible for $p = \max_{i =1,\dots,m}p_i$. Thus, Lemma \ref{lm:bilinear_mild} and Theorem \ref{thm:classical_solutions} apply for this $p$, where these results implicitly assume that each $\tilde{u}_i$ belongs to the same $L^p$ or Sobolev space.
    The latter can be relaxed to $\tilde{u}_i \in L_{\loc}^{p_i}([0,\infty))$ for mild solutions and $\tilde u_i \in W_{\loc}^{1,p_i}([0,\infty))$ for classical solutions. Indeed, the structure of \eqref{eq:example_bilinear_sum} allows us to estimate each integral over $T(t-\cdot) \tilde{u}_i \tilde{B}_ix$ separately using $L^{p_i}$-admissibility of $\tilde{B}_i$. Note that this argument also applies to the proof of Lemma~\ref{lm:bilinear_mild} in \cite{MR4440806}.
\end{remark}

\begin{remark}
    For linear control systems, i.e.  for $B_1=0$, one also obtains classical solutions under the regularity condition $u_2 \in W^{2,1}_{\loc}([0,\infty))$, provided that $A_{-1}x_0 + B_2u_2(0) \in X$, without requiring the admissibility of $B_2$, see e.g. \cite[Thm.~3.8.3]{staffans2005well}.
    However, in the bilinear case the situation is more delicate. In general, arbitrarily high regularity of $u_1$ alone does not ensure even the existence of mild solutions. Indeed, let $A$ be a generator of a $C_0$-semigroup, $B_1=A_{-1}$, and $F(u_1,x)=u_1x$ for $u_1 \in \CC$. Choosing $u_1=-2$ in a neighborhood of $0$ and $u_2=0$ leads to the backward Cauchy problem which is in general not well-posed. 
\end{remark}

\subsection{Continuous dependence on the data}
In this subsection, we prove that perturbations on the initial data $x_0, u_1, u_2$ only continuously change the solution $x$ on finite intervals. We first prove an a~priori norm estimate on the state $x$ on a finite interval.

\begin{lemma}\label{lem:x_bdd_by_data}
    Let $p_1,p_2 \in [1,\infty)$ and consider the bilinear system \eqref{eq:bilinear} with $L^{p_1}$-admissible, and $L^{p_2}$-admissible control operators $B_1$, and $ B_2$. Let $M \ge 1,\omega \geq 0$ such that $\norm{T(t)} \leq Me^{\omega t}$ holds for all $t \geq 0$. Then, for every $x_0 \in X$, $u_1 \in L^{p_1}_{\loc}([0,\infty); U_1)$, and $u_2 \in L^{p_2}_{\loc}([0,\infty); U_2)$ the mild solution $x$ of \eqref{eq:bilinear} satisfies for all $t \in [0, t_e]$
\begin{equation}\label{est:x_bdd_by_data}
    \norm{x(t)}_X \leq \left(Me^{\omega t_e}\norm{x_0}_X + K_{t_e} \norm{u_2}_{L^{p_2}([0,t_e]; U_2)}\right)\exp\left(K_{t_e}C \norm{u_1}_{L^{p_1}([0,t_e];U_1)}\right),
\end{equation}
where $K_{t_e} \coloneqq \max\{K_{B_1, t_e}, K_{B_2, t_e}\}$ is the maximum of the admissibility constants of $B_1$ and $B_2$ at time $t_e$. 
\end{lemma}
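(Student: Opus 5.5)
We start from the mild solution formula of Lemma~\ref{lm:bilinear_mild} and estimate each term. Fix $t\in[0,t_e]$. The semigroup term satisfies $\norm{T(t)x_0}_X\le Me^{\omega t_e}\norm{x_0}_X$. For the input term, Remark~\ref{rem:admissibility}(i) together with the monotonicity of the admissibility constants gives $K_{B_2,t}\le K_{t_e}$, and hence
\[
\norm{\Phi^{B_2}_t u_2}_X\le K_{t_e}\norm{u_2}_{L^{p_2}([0,t_e];U_2)} .
\]
For the bilinear term, the same remark combined with \eqref{eq:F_bdd} yields
\[
\norm{\Phi^{B_1}_t F(u_1,x)}_X\le K_{t_e}C\,\big\|\norm{u_1}_{U_1}\norm{x}_X\big\|_{L^{p_1}([0,t])} .
\]
Since $x$ is continuous, $F(u_1,x)$ lies in $L^{p_1}$ on $[0,t]$, so this bound is legitimate.

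The difficulty is that this bound is an $L^{p_1}$-norm rather than an $L^1$-integral, so Gronwall's inequality cannot be applied directly when $p_1>1$. We handle this exactly as in Step~2 of the proof of Theorem~\ref{thm:classical_solutions}, using Lemma~\ref{lem:equiv_Lp_norm}. For $\epsilon>0$ we pick $g_\epsilon\in L^{q_1}([0,t_e])$ with $\norm{g_\epsilon}_{L^{q_1}}\le1$ and bound the $L^{p_1}$-norm by
\[
\int_0^{t}\norm{u_1(s)}_{U_1}|g_\epsilon(s)|\,\norm{x(s)}_X\ds+\epsilon .
\]
The subtle point is that $g_\epsilon$ may depend on $t$. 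To get a single weight, we majorize $\chi_{[0,t]}\norm{u_1}\norm{x}$ by $\chi_{[0,t]}\norm{u_1}\sup_{[0,t]}\norm{x}$. Alternatively, and more simply, we work with $m(t):=\sup_{s\le t}\norm{x(s)}_X$ and choose $g_\epsilon$ once for $\norm{u_1}_{U_1}$ on $[0,t_e]$.

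The cleanest route is to apply Hölder pointwise in the form $\big\|\norm{u_1}\norm{x}\big\|_{L^{p_1}([0,t])}^{p_1}=\int_0^t\norm{u_1}^{p_1}\norm{x}^{p_1}$. This suggests a Gronwall argument for $\norm{x}^{p_1}$ instead, but that produces the constant with a factor $p_1$ in the wrong place. We therefore follow the paper's route. Set $a:=Me^{\omega t_e}\norm{x_0}_X+K_{t_e}\norm{u_2}_{L^{p_2}([0,t_e];U_2)}$. We aim for the integral inequality
\[
\norm{x(t)}_X\le a+K_{t_e}C\epsilon+K_{t_e}C\int_0^t\norm{u_1(s)}_{U_1}g_\epsilon(s)\norm{x(s)}_X\ds,
\]
with $g_\epsilon\ge0$ chosen independently of $t$. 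This is justified by choosing $g_\epsilon$ near-optimal for the function $\norm{u_1}\norm{x}$ on each $[0,t]$ and accepting the dependence as in the earlier proof. Gronwall's inequality then gives
\[
\norm{x(t)}_X\le (a+K_{t_e}C\epsilon)\exp\!\big(K_{t_e}C\norm{u_1 g_\epsilon}_{L^1}\big).
\]
By Hölder, $\norm{u_1g_\epsilon}_{L^1}\le\norm{u_1}_{L^{p_1}([0,t_e];U_1)}$. Letting $\epsilon\to0$ yields \eqref{est:x_bdd_by_data}.

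The main obstacle is the step where the $L^{p_1}$-norm is replaced by a weighted $L^1$-integral with a weight that is uniform in $t$. I expect to copy the treatment in Theorem~\ref{thm:classical_solutions}. If that proves shaky, I would instead run the Gronwall argument for $\norm{x}^{p_1}$ (using $(\alpha+\beta)^{p_1}\le 2^{p_1-1}(\alpha^{p_1}+\beta^{p_1})$). That alternative gives a bound of the same structure, although possibly with worse constants than those stated.
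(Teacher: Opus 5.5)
Your proposal breaks down exactly at the step you flag, and that step cannot be repaired. Lemma~\ref{lem:equiv_Lp_norm} applied to $\chi_{[0,t]}\|u_1\|_{U_1}\|x\|_X$ gives a weight $g_{\epsilon,t}$ that depends on $t$, and also on the unknown $x$. The resulting inequality $\|x(t)\|_X\le a+K_{t_e}C\epsilon+K_{t_e}C\int_0^t\|u_1(s)\|_{U_1}g_{\epsilon,t}(s)\|x(s)\|_X\,\mathrm{d}s$ therefore has no fixed kernel, and Gronwall's lemma does not apply. ``Accepting the dependence'' is not a justification.

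Copying the paper does not help either. Its proof takes precisely this route, even writing the integral over $[0,t_e]$ instead of $[0,t]$, and has the same defect. Your $\sup$-variant only closes when $K_{t_e}C\|u_1\|_{L^{p_1}([0,t];U_1)}<1$, and patching such intervals together gives an exponent growing like $\|u_1\|_{L^{p_1}}^{p_1}$. Only for $p_1=1$ is the argument sound: then no weight is needed, and Gronwall gives \eqref{est:x_bdd_by_data} directly.

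For $p_1>1$ the estimate \eqref{est:x_bdd_by_data} is in fact false. Take the following data:
\begin{itemize}
\item $X=L^2(0,\infty)$ with the right shift semigroup, so $M=1$ and $\omega=0$;
\item $Z=U_1=\CC$ and $B_1=\delta_0$ (inflow at $\xi=0$), which is $L^2$-admissible with $(\Phi_tv)(\xi)=v(t-\xi)\chi_{[0,t]}(\xi)$, so $K_t=1$;
\item $B_2=0$ and $F(u,x)=u\int_0^1x(\xi)\,\mathrm{d}\xi$, so $C=1$;
\item $x_0=\chi_{[0,1]}$, $u_1\equiv c$ and $t_e=1$.
\end{itemize}
The mild solution is $x(t,\xi)=g(t-\xi)$ for $\xi<t$ and $x(t,\xi)=x_0(\xi-t)$ for $\xi>t$. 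On $[0,1]$ the inflow solves $g(t)=c\bigl(\int_0^tg(s)\,\mathrm{d}s+1-t\bigr)$, so $g(t)=(c-1)e^{ct}+1$. Hence $\|x(1)\|_X^2\ge(c-1)^2(e^{2c}-1)/(2c)$. For every $c\ge4$ this exceeds $e^{2c}$, the square of the right-hand side of \eqref{est:x_bdd_by_data}.

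What you can actually prove for $p_1>1$ is your fallback. Raise the mild-solution estimate to the power $p_1$, use $(\alpha+\beta)^{p_1}\le2^{p_1-1}(\alpha^{p_1}+\beta^{p_1})$, and apply Gronwall to $\|x\|_X^{p_1}$. This gives
\[
\|x(t)\|_X\le 2^{1-1/p_1}\,a\,\exp\Bigl(\tfrac{2^{p_1-1}}{p_1}\bigl(K_{t_e}C\bigr)^{p_1}\|u_1\|_{L^{p_1}([0,t_e];U_1)}^{p_1}\Bigr),
\qquad a=Me^{\omega t_e}\|x_0\|_X+K_{t_e}\|u_2\|_{L^{p_2}([0,t_e];U_2)}.
\]
This is not ``the same structure with worse constants'': the exponent is of order $\|u_1\|_{L^{p_1}}^{p_1}$ rather than $\|u_1\|_{L^{p_1}}$. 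It still bounds $\sup_{[0,t_e]}\|x\|_X$ uniformly on bounded sets of data, which is all that Proposition~\ref{pr:continuous_dependence} takes from this lemma.
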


\begin{proof}
The mild solution of $\eqref{eq:bilinear}$ satisfies
\begin{equation*}
    x(t)=T(t)x_0 +\int_{0}^t T_{-1}(t-s)(B_1F(u_1(s),x(s)) + B_2u_2(s)) \ds.
\end{equation*}
We deduce from the boundedness of $F$ and the admissibility of $B_1$ and $B_2$ that
\begin{equation*}
    \begin{aligned}
        \nm[X]{x(t)}&\leq Me^{\omega t}\nm[X]{x_0} + K_t C \nm[L^{p_1}({[0, t]})]{\norm{u_1}_{U_1} \norm{x}_X} + K_t \norm{u_2}_{L^{p_2}([0,t]; U_2)}\\
        & \leq \underbrace{Me^{\omega t_e}\nm[X]{x_0} + K_{t_e} \norm{u_2}_{L^{p_2}([0,t_e]; U_2)} +\epsilon}_{ \eqcolon a} + K_{t_e}C\nm[L^1({[0,t_e]})]{\norm{u_1}_{U_1} g_\epsilon \norm{x}_X},
    \end{aligned}
\end{equation*}
where $\epsilon > 0$, $q_1$ is the H\"older conjugate to $p_1$ and $g_\epsilon \in L^{q_1}([0,t_e])$  with $\nm[L^{q_1}{([0,t_e])}]{g_\epsilon} \le 1$ is chosen as in Lemma~\ref{lem:equiv_Lp_norm}. Gronwall's inequality and H\"older's inequality imply
\begin{equation*}
    \norm{x(t)}_X \le a \exp\left(K_{t_e}C \norm{\norm{u_1}_{U_1} g_\epsilon}_{L^1([0,t_e])}\right) \le a\exp\left(K_{t_e}C \norm{u_1}_{L^{p_1}([0,t_e];U_1)}\right).
\end{equation*}
Since this holds for every $\epsilon>0$, we obtain \eqref{est:x_bdd_by_data}.
\end{proof}
We now show the continuous dependence of the solutions on the initial condition $x_0$ and inputs~$u_1$ and $u_2$. 
\begin{proposition}\label{pr:continuous_dependence}
    Let $p_1,p_2 \in [1,\infty)$ and consider the bilinear system \eqref{eq:bilinear} with $L^{p_1}$-admissible, and $L^{p_2}$-admissible control operators $B_1$, and $ B_2$. Then, for any $t_e > 0$ the mild solution depends continuously in $C([0, t_e];X)$ on $x_0 \in X$, $u_1 \in L^{p_1}([0, t_e];U_1)$, and $u_2 \in L^{p_2}([0, t_e];U_2)$.  
\end{proposition}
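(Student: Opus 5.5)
The plan is to fix two data triples $(x_0,u_1,u_2)$ and $(\tilde x_0,\tilde u_1,\tilde u_2)$ with mild solutions $x,\tilde x$, and to derive a Gronwall-type estimate for the difference $e\coloneqq x-\tilde x$ on $[0,t_e]$. Bilinearity of $F$ gives
\begin{equation*}
F(u_1,x)-F(\tilde u_1,\tilde x) = F(u_1,e) + F(u_1-\tilde u_1,\tilde x).
\end{equation*}
Subtracting the two mild-solution formulas therefore yields, for $t\in[0,t_e]$,
\begin{equation*}
e(t) = T(t)(x_0-\tilde x_0) + \int_0^t T_{-1}(t-s)\bigl(B_1F(u_1(s),e(s)) + B_1F(u_1(s)-\tilde u_1(s),\tilde x(s)) + B_2(u_2(s)-\tilde u_2(s))\bigr)\ds .
\end{equation*}
So $e$ is itself a mild solution of a system of the form \eqref{eq:bilinear}, with bilinear input $u_1$ and an additional forcing term.

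Next I would estimate each term with the admissibility constant $K_{t_e}$ and \eqref{eq:F_bdd}. The forcing term is bounded by
\begin{equation*}
K_{t_e}C\,\norm{u_1-\tilde u_1}_{L^{p_1}([0,t_e];U_1)}\,\norm{\tilde x}_{C([0,t_e];X)} .
\end{equation*}
Lemma~\ref{lem:x_bdd_by_data} bounds $\norm{\tilde x}_{C([0,t_e];X)}$ uniformly in terms of $\norm{\tilde x_0}$, $\norm{\tilde u_1}_{L^{p_1}}$ and $\norm{\tilde u_2}_{L^{p_2}}$. This bound is uniform for perturbed data in a bounded neighbourhood of the fixed data, which is exactly what continuity at a point requires. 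The $B_2$ term is bounded by $K_{t_e}\norm{u_2-\tilde u_2}_{L^{p_2}}$, and the initial term by $Me^{\omega t_e}\norm{x_0-\tilde x_0}$.

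The remaining term $K_{t_e}C\,\bigl\|\,\norm{u_1}_{U_1}\norm{e}_X\bigr\|_{L^{p_1}([0,t])}$ is handled exactly as in the proof of Lemma~\ref{lem:x_bdd_by_data}. Using Lemma~\ref{lem:equiv_Lp_norm}, I replace the $L^{p_1}$ norm by an $L^1$ norm against some $g_\epsilon$ with $\norm{g_\epsilon}_{L^{q_1}}\le 1$, then apply Gronwall and H\"older. This gives
\begin{equation*}
\norm{e}_{C([0,t_e];X)} \le \Bigl(Me^{\omega t_e}\norm{x_0-\tilde x_0} + K_{t_e}C\,R\,\norm{u_1-\tilde u_1}_{L^{p_1}} + K_{t_e}\norm{u_2-\tilde u_2}_{L^{p_2}}\Bigr)\exp\bigl(K_{t_e}C\norm{u_1}_{L^{p_1}([0,t_e];U_1)}\bigr),
\end{equation*}
where $R$ is the uniform bound on $\tilde x$. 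This is a local Lipschitz estimate and hence implies continuity.

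The main subtlety is the Gronwall step. Because the estimate has an $L^{p_1}$ norm in time rather than an $L^1$ integral, one has to be careful: the function $g_\epsilon$ depends on $t$ through the interval. As in the earlier lemma, I would choose $g_\epsilon$ once on the whole interval $[0,t_e]$. Then for every $t\le t_e$,
\begin{equation*}
\bigl\|\,\norm{u_1}_{U_1}\norm{e}_X\bigr\|_{L^{p_1}([0,t])} \le \bigl\|\,\norm{u_1}_{U_1}\norm{e}_X\bigr\|_{L^{p_1}([0,t_e])},
\end{equation*}
which is exactly what the earlier proof uses implicitly. If that route proves awkward, the fallback is to split $[0,t_e]$ into finitely many subintervals on which $K_{t_e}C\norm{u_1}_{L^{p_1}}\le\tfrac12$. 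On each subinterval the $e$-term is absorbed into the left-hand side, and the bounds are then chained across the subintervals.
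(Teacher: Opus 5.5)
Your argument is the paper's proof up to the order of the splitting. The paper uses $F(u_1-\tilde u_1,x)+F(\tilde u_1,x-\tilde x)$, so its forcing term carries the fixed solution $x$, which is bounded simply because $x\in C([0,t_e];X)$, and its Gronwall kernel carries $\tilde u_1$. Your splitting puts the fixed $u_1$ in the kernel instead, and in exchange you need Lemma~\ref{lem:x_bdd_by_data} for a bound on $\tilde x$ that is uniform near the data.

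The step you call primary does not work for $p_1>1$, and the same objection applies to the Gronwall step as written in the paper. If $g_\epsilon$ is chosen once on $[0,t_e]$, you obtain $\|e(t)\|_X\le a+K_{t_e}C\int_0^{t_e}\|u_1(s)\|_{U_1}g_\epsilon(s)\|e(s)\|_X\,\mathrm{d}s$ for every $t$, with the integral over the \emph{whole} interval. That is not a Gronwall inequality. It only closes by absorption when $K_{t_e}C\|u_1\|_{L^{p_1}([0,t_e];U_1)}<1$.

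You cannot shrink the integral to $[0,t]$ with the same $g_\epsilon$, because the near-equality of Lemma~\ref{lem:equiv_Lp_norm} does not survive restriction to $[0,t]$. Hölder only gives $\int_0^t fg_\epsilon\le\|f\|_{L^{p_1}([0,t])}$, which is the wrong direction.

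In fact the displayed factor $\exp\bigl(K_{t_e}C\|u_1\|_{L^{p_1}}\bigr)$ does not follow from the integral inequality at all. For $p_1=2$, let $\phi=a(1+v)$ with $v$ defined by $2\bigl(\ln(1+v)+\frac{1}{1+v}-1\bigr)=c^2\int_0^t|u|^2$. Then $\phi(t)=a+c\|u\phi\|_{L^2([0,t])}$ holds with equality. Yet $\phi(t)>a\exp\bigl(c\|u\|_{L^2([0,t])}\bigr)$ whenever $\int_0^t|u|^2>0$.

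So you should commit to your fallback, which is correct. This is also where your splitting pays off: the partition of $[0,t_e]$ into intervals $I_j$ with $K_{t_e}C\|u_1\|_{L^{p_1}(I_j;U_1)}\le\frac12$ depends only on the fixed $u_1$. On each $I_j$, restart the mild formula at $t_j$, use shift invariance of the admissibility estimate, absorb the $e$-term, and chain across intervals. This gives a Lipschitz bound with a constant of order $(2Me^{\omega t_e})^N$.

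Alternatively, raise the estimate to the $p_1$-th power,
\[
\|e(t)\|_X^{p_1}\le 2^{p_1-1}\Bigl(a^{p_1}+(K_{t_e}C)^{p_1}\int_0^t\|u_1(s)\|_{U_1}^{p_1}\|e(s)\|_X^{p_1}\,\mathrm{d}s\Bigr),
\]
and apply the classical Gronwall lemma to $\|e\|_X^{p_1}$.

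The bound $R$ you import from Lemma~\ref{lem:x_bdd_by_data} needs the same repair, since its proof uses the same step. The repaired bound is still uniform on bounded sets of data, so your local Lipschitz conclusion stands with modified constants.
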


\begin{proof}
Let $x_0, \tilde x_0 \in X$, $u_1, \tilde u_1 \in L^{p_1}([0,t_e]; U_1)$, and $u_2, \tilde u_2 \in L^{p_2}([0,t_e]; U_2)$. Then, the difference of the corresponding mild solutions satisfies
\begin{align*}
        x(t)-\tilde x(t)&= T(t)(x_0-\tilde x_0)
\\&+\! \int_{0}^t T_{-1}(t-s)\big(B_1F(u_1(s),x(s))-B_1F(\tilde u_1(s),\tilde x(s)) + B_2(u_2(s)-\tilde u_2(s)) \big)\;\!\!\ds.
\end{align*}
Split $F(u_1,x) - F(\tilde u_1,\tilde x) = F(u_1-\tilde u_1, x) + F(\tilde u_1, x - \tilde x)$ and let $K_{t_e} \coloneqq \max\{K_{B_1, t_e}, K_{B_2, t_e}\}$ be the maximum of the admissibility constants of $B_1$ and $B_2$ at time $t_e$. Then, we obtain
\begin{equation*}
    \begin{aligned}
        \MoveEqLeft \nm[X]{x(t)-\tilde x(t)} \\
        &\le Me^{\omega t}\nm[X]{x_0 - \tilde x_0} + K_t\norm{u_2-\tilde u_2}_{L^{p_2}([0,t_e];U_2)} + K_tC \nm[L^{p_1}({[0, t]})]{\norm{u_1- \tilde u_1}_{U_1} \norm{x}_X}\\
        &\hspace{1em} + K_tC \nm[L^{p_1}({[0,t]})]{ \norm{\tilde u_1}_{U_1} \norm{x-\tilde x}_X} \\
        &\le \underbrace{Me^{\omega t_e}\nm[X]{x_0 - \tilde x_0} + K_{t_e}\norm{u_2-\tilde u_2}_{L^{p_2}([0,t_e];U_2)} + K_{t_e}C \sup_{s\in [0,t_e]} \norm{x(s)}_X \norm{u_1-\tilde u_1}_{L^{p_1}([0,t_e]; U_1)} + \epsilon}_{\eqcolon a}\\
        &\hspace{1em} + K_{t_e}C \norm{\norm{\tilde u_1}_{U_1} g_\epsilon \norm{x-\tilde x}_X}_{L^1([0,t_e])},
    \end{aligned}
\end{equation*}
where $\epsilon > 0$ is arbitrary, $q_1$ is the H\"older conjugate to $p_1$ and $g_\epsilon \in L^{q_1}([0, t_e])$ with $\norm{g_\epsilon}_{L^{q_1}([0, t_e])}\le 1$ is chosen as in Lemma~\ref{lem:equiv_Lp_norm}. 
By Lemma \ref{lem:x_bdd_by_data}, $a$ is indeed bounded. Gronwall's inequality and H\"older's inequality imply
\begin{equation*}
    \norm{x(t)-\tilde x(t)}_X \le a \exp\left(K_{t_e}C \norm{\norm{\tilde u_1}_{U_1} g_\epsilon}_{L^1([0,t_e])}\right) \le a \exp\left(K_{t_e}C \norm{\tilde u_1}_{L^{p_1}([0,t_e]; U_1)}\right),
\end{equation*}
which shows continuous dependence on finite intervals.
\end{proof}

\section{Passivity properties of bilinear control systems}
\label{sec:passive}
In this section, we use the solution theory developed in
Section~\ref{sec:well-posedness} to prove passivity for a class of abstract
bilinear systems. Throughout this section, $X$, $U_1$, $U_2$, and $Z$ are
Hilbert spaces. We equip $U_1\times U_2$ with the inner product
\[
\left\langle
\begin{bmatrix}v_1\\v_2\end{bmatrix},
\begin{bmatrix}w_1\\w_2\end{bmatrix}
\right\rangle_{U_1\times U_2}
\coloneqq
\langle v_1,w_1\rangle_{U_1}
+
\langle v_2,w_2\rangle_{U_2}.
\]
For fixed $x\in X$, let $F_x\colon U_1\to Z$ be given by
$F_xu=F(u,x)$. Then $F_x$ is bounded, and the map
$x\mapsto F_x$ is bounded from $X$ to $\mathcal L(U_1,Z)$. To investigate passivity, we consider \eqref{eq:bilinear} with a co-located output
\begin{equation}\label{eq:co-located_mild_bilinear}\tag{$\Sigma_{\text{co}}$}
    \left\{\begin{aligned}
        \dot x(t) &= Ax(t) + B_1F(u_1(t),x(t)) + B_2u_2(t),\qquad  t\in (0, \infty),\\
        x(0) &= x_0,\\
        y(t) &= \begin{bmatrix}F_{x(t)}^* B_1^*x(t) \\ B_2^*x(t)\end{bmatrix} \in U_1 \times U_2,
    \end{aligned}\right.
\end{equation}
where, for the moment, we assume that $B_1$ and $B_2$ are bounded, and where
$F_x^*$ and $B_i^*$, $i=1,2$, denote the adjoint operators of $F_x$ and $B_i$, respectively. Consequently, the mappings $x \mapsto F_x^*B_1^*x$ and $x \mapsto B_2^*x$ are well-defined and continuous from $X$ to $U_1$ and from $X$ to $U_2$, respectively.

If $U_1= \CC$ and $F(u,x)=ux$, then $F_x^*B_1^*x = \inprod{x}{B_1x}_X$ and if $U_1 = \CC^m$, $F(u,x)=ux \in X^m$ and $B_1 = \begin{bmatrix}
    \tilde B_1 & \dots & \tilde B_m
\end{bmatrix}^\top$, then \[
F_x^*B_1^*x = \begin{bmatrix}
    \inprod{x}{\tilde B_1x} & \dots & \inprod{x}{\tilde B_mx}
\end{bmatrix}^\top.
\]

Recall that the boundedness of $B_1$ and $B_2$ implies $L^p$-admissibility for any $p$ and thus there exists a unique mild solution $x \in C([0,\infty);X)$ for every $x_0 \in X$, $u_1 \in L^1_{\loc}([0,\infty);U_1)$ and $u_2 \in L^1_{\loc}([0,\infty);U_2)$.
Moreover, if $x_0 \in X, u_1 \in W^{1,1}_{\loc}([0,\infty);U_1)$, and $u_2 \in W^{1,1}_{\loc}([0,\infty);U_2)$ satisfy $A_{-1}x_0 + B_1F(u_1(0),x_0) + B_2u_2(0) \in X$, then the mild solution is a classical solution and
\[
A_{-1}x(t) = \dot{x}(t) - B_1F(u_1(t),x(t)) - B_2u_2(t) \in X
\]
holds for $t \in [0,\infty)$ since $B_1$ and $B_2$ are bounded on $X$. Thus, we have $A_{-1}x(t) \in X$ which implies $x(t) \in D(A)$ and $A_{-1}x(t)=Ax(t)$. 

We recall the definition of a passive input-output system based on \cite{Staffans2002} tailored to our setting. 
\begin{definition}
System \eqref{eq:co-located_mild_bilinear} is called
\emph{passive} (or \emph{impedance-passive}) if there exists a function
$S\colon X\to[0,\infty)$ such that
\begin{equation}
\label{eq:dissipation}
S(x(t_1))-S(x(t_0))
\le
\Re\int_{t_0}^{t_1}
\left\langle
\begin{bmatrix}u_1(s)\\u_2(s)\end{bmatrix},
y(s)
\right\rangle_{U_1\times U_2}\,\mathrm ds
\end{equation}
holds for all $0\le t_0\le t_1<\infty$ and all admissible inputs for which
the corresponding mild solution exists. Such a function $S$ is called a
\emph{storage function}.
\end{definition}
Recall the Lumer-Phillips theorem, which states that a linear operator $A\colon D(A) \subset X \to X$ generates a contractive $C_0$-semigroup on the Hilbert space $X$ if and only if $A$ is dissipative and $A-\lambda$ is surjective for some $\lambda > 0$. 

\begin{theorem}\label{th:passivity_bounded_B}
    Let $X,U_1,U_2,Z$ be Hilbert spaces, $A$ generate a contractive $C_0$-semigroup on $X$, $B_1\in \mathcal{L}(Z,X)$, and $B_2 \in \mathcal L(U_2, X)$. Then the following assertions hold.
    \begin{enumerate}[(i)]
        \item If $u_1 \in W^{1,1}_{\loc}([0, \infty);U_1)$, $u_2 \in W^{1,1}_{\loc}([0, \infty); U_2)$ and $x_0 \in D(A)$, then the classical solution $x$ of \eqref{eq:co-located_mild_bilinear} satisfies
        \begin{equation*}
            \frac12\frac{\d}{\d t}\norm{x(t)}_X^2 \le \Re \inprod{\begin{bmatrix}u_1(t) \\ u_2(t)\end{bmatrix}}{y(t)}_{U_1 \times U_2} \qquad \text{for $t \in [0, \infty)$}.
        \end{equation*}

        \item If $u_1 \in L^{1}_{\loc}([0, \infty);U_1)$, $u_2 \in L^{1}_{\loc}([0, \infty); U_2)$ and $x_0 \in X$, then the mild solution $x$ of \eqref{eq:co-located_mild_bilinear} satisfies
        \begin{equation*}
            \frac12\norm{x(t)}_X^2 - \frac12\norm{x_0}_X^2 \le \Re \int_0^t \inprod{\begin{bmatrix}u_1(s) \\ u_2(s)\end{bmatrix}}{y(s)}_{U_1 \times U_2} \ds \qquad \text{for $t \in [0, \infty)$}.
        \end{equation*}
    \end{enumerate}
\end{theorem}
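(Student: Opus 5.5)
For (i), we first check that the classical solution exists and lies in $D(A)$. Since $B_1,B_2$ are bounded, they are $L^1$-admissible. For $x_0\in D(A)$ the compatibility condition $A_{-1}x_0+B_1F(u_1(0),x_0)+B_2u_2(0)\in X$ holds trivially, because $A_{-1}x_0=Ax_0\in X$ and the other two terms lie in $X$. Theorem~\ref{thm:classical_solutions} with $p_1=p_2=1$ therefore gives $x\in C^1([0,\infty);X)$. The discussion preceding the theorem then yields $x(t)\in D(A)$ and
\[
\dot x(t)=Ax(t)+B_1F(u_1(t),x(t))+B_2u_2(t)
\]
for (almost) all $t$. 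Since $x\in C^1([0,\infty);X)$, the map $t\mapsto\frac12\|x(t)\|_X^2$ is $C^1$ with derivative $\Re\langle \dot x(t),x(t)\rangle_X$. Inserting the equation, we treat the three terms separately:
\begin{itemize}
\item $\Re\langle Ax,x\rangle\le 0$ by dissipativity of $A$, which holds by the Lumer--Phillips theorem since $A$ generates a contraction semigroup;
\item $\langle B_1F_{x}u_1,x\rangle_X=\langle u_1,F_x^*B_1^*x\rangle_{U_1}$;
\item $\langle B_2u_2,x\rangle_X=\langle u_2,B_2^*x\rangle_{U_2}$.
\end{itemize}
Summing gives $\frac12\frac{\d}{\d t}\|x\|^2\le\Re\langle (u_1,u_2),y\rangle_{U_1\times U_2}$. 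The equation holds a.e., but both sides are continuous in $t$ (because $x$, $u_1$, $u_2$ are continuous), so the inequality holds for every $t$.

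For (ii) we use density and Proposition~\ref{pr:continuous_dependence}. First, we integrate the inequality from (i) over $[0,t]$ to obtain the integrated energy inequality for smooth data $x_0\in D(A)$, $u_i\in W^{1,1}_{\loc}$. Next, fix $t_e\ge t$ and approximate general data: take $x_0^n\in D(A)$ with $x_0^n\to x_0$ in $X$, and $u_i^n\in W^{1,1}([0,t_e];U_i)$ (e.g. smooth functions) with $u_i^n\to u_i$ in $L^1([0,t_e];U_i)$. Proposition~\ref{pr:continuous_dependence} with $p_1=p_2=1$ gives $x^n\to x$ in $C([0,t_e];X)$, so the left-hand side converges.

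The main obstacle is passing to the limit in the right-hand side
\[
\int_0^t \langle u_1^n,F_{x^n}^*B_1^*x^n\rangle+\langle u_2^n,B_2^*x^n\rangle\ds,
\]
since the $u^n$ converge only in $L^1$. The continuity of $x$ saves us here. The output map $x\mapsto (F_x^*B_1^*x,B_2^*x)$ is continuous (quadratic bounded) from $X$ to $U_1\times U_2$, and $\|F_x^*B_1^*x\|\le C\|B_1\|\|x\|^2$. Hence $y^n\to y$ uniformly on $[0,t_e]$ with a uniform bound. Writing
\[
\langle u^n,y^n\rangle-\langle u,y\rangle=\langle u^n-u,y^n\rangle+\langle u,y^n-y\rangle,
\]
the first term is bounded by $\|u^n-u\|_{L^1}\sup\|y^n\|$ and the second by $\|u\|_{L^1}\sup\|y^n-y\|$, so both tend to zero. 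Passing to the limit preserves the inequality and proves (ii).
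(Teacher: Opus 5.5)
Your proof is correct and follows the paper's own argument: for (i) you use dissipativity of $A$ together with the adjoint identities for the $B_1$ and $B_2$ terms, and for (ii) you integrate, then approximate via density of $D(A)$ and $W^{1,1}_{\loc}$, using Theorem~\ref{thm:classical_solutions} and Proposition~\ref{pr:continuous_dependence}. You also spell out two details the paper leaves implicit: the compatibility condition is automatic for $x_0\in D(A)$, and $L^1$-convergence of the inputs suffices because $y^n\to y$ uniformly on $[0,t_e]$ with a uniform bound.
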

\begin{proof}
    For (i), since $A$ is dissipative, every classical solution satisfies, for $t>0$,
    \begin{equation*}
    \begin{aligned}
        \frac12\frac{\d}{\d t}\norm{x(t)}_X^2 = \Re \inprod{\dot x(t)}{x(t)}_X &= \Re \inprod{Ax(t)}{x(t)}_X + \Re\inprod{B_1F(u_1(t),x(t)) + B_2u_2(t)}{x(t)}_X\\
        &\le \Re \inprod{\begin{bmatrix}u_1(t) \\ u_2(t)\end{bmatrix}}{y(t)}_{U_1 \times U_2}.
    \end{aligned}
    \end{equation*}

    For (ii), by integrating the inequality in (i), we obtain the claim for classical solutions immediately. Since $D(A)$ is dense in $X$ and $ W^{1, 1}_{\loc}([0, \infty);U_i)$ is dense in $L^{1}_{\loc}([0, \infty);U_i)$ for $i =1,2$, it follows from Theorem~\ref{thm:classical_solutions} and Proposition~\ref{pr:continuous_dependence} that every mild solution can be approximated uniformly on compact intervals in the $X$-norm by classical solutions. Finally, since $y = \begin{bmatrix}F_x^* B_1^*x \\ B_2^*x\end{bmatrix}$ depends continuously on $x$ the claim follows by approximation.
\end{proof}

Now suppose that $B_1$ and $B_2$ are unbounded. While $\dot{x}(t) = A_{-1}x(t) + B_1F(u_1(t),x(t)) + B_2u_2(t) \in X$ still holds for classical solutions, the individual terms $A_{-1}x(t)$, $B_1F(u_1(t),x(t))$, and $B_2u_2(t)$ do not have to be in $X$ but only in $X_{-1}$ when separated. Thus, splitting $\inprod{\dot x}{x}_X$ as done in the previous proof is not feasible anymore.
To overcome this issue one may assume additional regularity of $A$, $B_1$ and $B_2$.

Let $A$ be self-adjoint and negative, that is $\langle Ax,x\rangle \leq 0$ for all $x \in D(A)$, so that $A$ generates a bounded analytic semigroup. Let us briefly recall the standard construction of the spaces $X_{\frac{1}{2}}$ and $X_{-\frac{1}{2}}$ associated to $A$. For $\lambda>0$ the operator $\lambda-A$ is strictly positive, thus
\begin{equation*}
    \begin{aligned}
        \norm x^2_{X_{\frac12}} &= \inprod{(\lambda-A)x}{x}_X \qquad &\text{ for } x \in D(A),\\
        \norm x_{X_{-\frac12}} &= \sup_{\norm v_{X_{\frac12}}\le 1}\left|\inprod{x}{v}_X\right| \qquad &\text{ for } x \in X
    \end{aligned}
\end{equation*}
define norms on $D(A)$ and $X$, respectively. Note that different choices of $\lambda>0$ lead to equivalent norms.
The fractional interpolation space $X_{\frac12}$ is defined as the completion
of $D(A)$ with respect to $\|\cdot\|_{X_{\frac12}}$. The fractional
extrapolation space $X_{-\frac12}$ is the dual space of $X_{\frac12}$ with
respect to the pivot space $X$; equivalently, it is obtained as the completion
of $X$ with respect to $\|\cdot\|_{X_{-\frac12}}$, cf.~\cite[Sec.~2.10]{MR2502023}.
The inner product $\langle\cdot,\cdot\rangle_X$ continuously extends to a
sesquilinear duality pairing
\[
    \langle\cdot,\cdot\rangle_{X_{-\frac12},X_{\frac12}}
\]
between $X_{-\frac12}$ and $X_{\frac12}$. Moreover,
$A_{-1}\colon X_{\frac12}\to X_{-\frac12}$ is bounded and satisfies
\[
    \Re\langle A_{-1}x,x\rangle_{X_{-\frac12},X_{\frac12}}\le0,
    \qquad \text{for }x\in X_{\frac12}.
\]

\begin{theorem}\label{th:passivity_unbounded}
    Let $X, U_1, U_2, Z$ be Hilbert spaces, $A$ self-adjoint and negative, $B_1\in \mathcal{L}(Z,X_{-\frac{1}{2}})$, and $B_2\in \mathcal{L}(U_2,X_{-\frac12})$. Then, for all $x_0 \in X$, $u_1\in L^{2}_{\loc}([0,\infty);U_1)$, and $u_2\in L^{2}_{\loc}([0,\infty);U_2)$, the corresponding mild solution $x$ satisfies 
    \begin{equation*}
        x \in H^1_{\loc}((0,\infty); X_{-\frac{1}{2}}) \cap C([0, \infty); X) \cap L^2_{\loc}((0, \infty); X_{\frac{1}{2}}),
    \end{equation*}
and the output $y= \left[\begin{smallmatrix} F_{x(\cdot)}^*B_1^*x(\cdot)\\B_2^*x(\cdot)\end{smallmatrix}\right]$ of \eqref{eq:co-located_mild_bilinear} is well-defined as function in $L^2_{\rm loc}([0,\infty);U_1 \times U_2)$. Moreover, for all $ t \in [0, \infty) $, we have
\begin{align*}
\frac12\|x(t)\|_X^2 - \frac12\|x_0\|_X^2
&= \Re\int_0^t \inprod{Ax(s)}{x(s)}_{X_{-\frac12},X_{\frac12}} + \inprod{B_1F(u_1(s),x(s)) + B_2u_2(s)}{x(s)}_{X_{-\frac12},X_{\frac12}} \ds \\ 
&\leq \Re\int_0^t  \inprod{B_1F(u_1(s),x(s)) + B_2u_2(s)}{x(s)}_{X_{-\frac12},X_{\frac12}} \ds\\
&= \Re\int_0^t \left\langle \begin{bmatrix}u_1(s)\\u_2(s)\end{bmatrix}, y(s) \right\rangle_{U_1\times U_2} \ds.
\end{align*}
\end{theorem}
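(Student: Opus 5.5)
Our plan is to reduce to the linear maximal-regularity theory for the self-adjoint operator $A$ on the Gelfand triple $X_{\frac12}\hookrightarrow X\hookrightarrow X_{-\frac12}$, and then obtain the energy identity by the standard Lions--Magenes chain rule.

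\emph{Step 1: admissibility.} Since $A$ is self-adjoint and negative, every $B\in\mathcal L(V,X_{-\frac12})$ is $L^2$-admissible, for $V\in\{Z,U_2\}$. Indeed, by the spectral theorem, $\int_0^t T_{-1}(t-s)Bv(s)\ds$ is the solution of the linear problem with forcing $Bv\in L^2(0,t;X_{-\frac12})$. The classical $L^2$ maximal regularity for forms (Lions' theorem), applied to the coercive form $\langle(\lambda-A)x,x\rangle$ on $X_{\frac12}$, gives a solution in $H^1(0,t;X_{-\frac12})\cap L^2(0,t;X_{\frac12})\subset C([0,t];X)$, with norm bounded by $c\,\nm[L^2]{v}$. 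Hence Lemma~\ref{lm:bilinear_mild} applies with $p_1=p_2=2$ and yields the unique mild solution $x\in C([0,\infty);X)$.

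\emph{Step 2: regularity of $x$.} Set $f\coloneqq B_1F(u_1,x)+B_2u_2$. By \eqref{eq:F_bdd}, $\nm[Z]{F(u_1(s),x(s))}\le C\nm{u_1(s)}\sup_{[0,t]}\nm[X]{x}$, so $f\in L^2_{\loc}([0,\infty);X_{-\frac12})$. The mild solution $x$ coincides with the mild solution of the linear problem $\dot z=A_{-1}z+f$, $z(0)=x_0$. Lions' theorem for that linear problem then gives $x\in L^2_{\loc}([0,\infty);X_{\frac12})\cap H^1_{\loc}((0,\infty);X_{-\frac12})$ with $\dot x=A_{-1}x+f$ in $X_{-\frac12}$. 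Here we use that $x_0\in X$ suffices for $L^2$ integrability in $X_{\frac12}$ near $0$, by the analytic semigroup estimate $\int_0^t\nm[X_{\frac12}]{T(s)x_0}^2\ds\lesssim\nm{x_0}^2$.

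\emph{Step 3: output and energy identity.} The output is well defined because $x(s)\in X_{\frac12}$ for a.e.\ $s$ and $B_1^*\in\mathcal L(X_{\frac12},Z)$, $B_2^*\in\mathcal L(X_{\frac12},U_2)$ (as adjoints with respect to the pivot). Moreover $\nm{F_{x}^*B_1^*x}\le C\nm[X]{x}\nm[X_{\frac12}]{x}$, which lies in $L^2_{\loc}$ since $x$ is bounded in $X$ and in $L^2_{\loc}$ in $X_{\frac12}$. For $x\in L^2(X_{\frac12})\cap H^1(X_{-\frac12})$, the Lions--Magenes lemma gives that $\nm{x}^2$ is absolutely continuous with $\frac{\d}{\d t}\frac12\nm{x}^2=\Re\langle\dot x,x\rangle_{X_{-\frac12},X_{\frac12}}$. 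Inserting $\dot x=A_{-1}x+f$ and integrating gives the first equality. The inequality follows from $\Re\langle A_{-1}x,x\rangle\le0$. The last equality is the duality computation
\[
\langle B_1F(u_1,x),x\rangle=\langle F(u_1,x),B_1^*x\rangle_Z=\langle u_1,F_x^*B_1^*x\rangle_{U_1},
\]
together with the analogous identity for $B_2$.

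\emph{Main obstacle.} The main obstacle is the behaviour at $t=0$: $x$ need not lie in $H^1$ near $0$ in $X_{-\frac12}$ when $x_0\notin X_{\frac12}$, which is why the statement uses $H^1_{\loc}((0,\infty);\cdot)$. We would therefore apply the chain rule on $[\epsilon,t]$ and let $\epsilon\to0$. The integrands are in $L^1(0,t)$, since $\langle f,x\rangle\le\nm[X_{-\frac12}]{f}\nm[X_{\frac12}]{x}$ and both factors are in $L^2(0,t)$; and $x(\epsilon)\to x_0$ in $X$ by continuity. Alternatively, one can in fact treat $[0,t]$ directly, since Lions' theorem with initial data in $X$ already gives $H^1(0,t;X_{-\frac12})$.
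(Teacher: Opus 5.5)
Your proof is correct, and its skeleton matches the paper's. You obtain $L^2$-admissibility from $B_i\in\mathcal L(\cdot,X_{-\frac12})$, take the mild solution from Lemma~\ref{lm:bilinear_mild}, and freeze the bilinear term into the $L^2_{\loc}$ input $F(u_1,x)$ so that $x$ solves a linear problem. You then finish with dissipativity of $A_{-1}$ and the duality identity $\langle B_1F(u_1,x),x\rangle=\langle u_1,F_x^*B_1^*x\rangle_{U_1}$.

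The difference is in the linear step. The paper does not prove regularity and the energy identity itself; it transfers them from Lemma~3.6 of \cite{hosfeld2024input}, and notes that this transfer requires the continuous dependence result of Proposition~\ref{pr:continuous_dependence}. You instead apply J.-L.~Lions' theorem on the Gelfand triple $X_{\frac12}\hookrightarrow X\hookrightarrow X_{-\frac12}$ together with the Lions--Magenes chain rule. This route is more self-contained. It also shows that once $f=B_1F(u_1,x)+B_2u_2$ is frozen, no continuous-dependence result for the bilinear system is needed. The one step you leave implicit is the standard identification of the Lions solution with the mild solution; it holds, for example, because the Lions solution is a strong solution in $X_{-1}$.

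Your stated ``main obstacle'' at $t=0$ does not exist. For $x_0\in X$, Lions' theorem already gives $x\in H^1(0,t;X_{-\frac12})$ on the closed interval, as you yourself note at the end. So the claim that $x$ may fail to be $H^1$ near $0$ is wrong, and the $\epsilon\to0$ argument is unnecessary, though harmless.
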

\begin{proof}
    Since $B_1 \in \mathcal{L}(Z,X_{-\frac{1}{2}})$ and $B_2 \in \mathcal{L}(U_2,X_{-\frac{1}{2}})$, they are $L^2$-admissible with respect to $(T(t))_{t\ge0}$, see e.g.~\cite[Prop.~6.5]{MR3230878}. According to Lemma \ref{lm:bilinear_mild}, we have a global mild solution $x$ and $v \coloneqq F(u_1,x) \in L^2_{\loc}([0, \infty); Z)$. Note that $x$ is also the mild solution of the linear control problem
    \begin{equation*}
        \left\{\begin{aligned}
        \dot x(t) &= Ax(t) + [B_1 \;\; B_2] \begin{bmatrix}
            v(t) \\ u_2(t)
        \end{bmatrix}, \qquad t\in (0, \infty),\\
        x(0) &= x_0,
        \end{aligned}\right.
    \end{equation*}
    with the augmented control input $\begin{smallbmatrix}
        v\\ u_2
    \end{smallbmatrix}$.    
    The claimed regularity of $x$ now follows analogously to Lemma 3.6 in \cite{hosfeld2024input}, which requires the continuous dependence on the initial and input data as derived in Proposition~\ref{pr:continuous_dependence}. In particular, the boundedness assumption on $F$ and $B_1$ imply $F_{x(\cdot)}^* \in C([0,\infty);\mathcal{L}(Z,U_1))$ as well as $B_1^*x(\cdot) \in L^2_{\loc}([0,\infty);Z)$. Thus, the first component of $y$ has the claimed regularity. The regularity of the second component is clear by the assumption on $B_2$.
    Finally, the identities and the inequality in the theorem follow as in Lemma 3.6 in \cite{hosfeld2024input}.
\end{proof}

\section{Verification of passivity for various examples}
\label{sec:applications}
\subsection{Controlled bilinear Schrödinger equation}
The wave function of a particle in one spatial dimension $\psi:\RR\rightarrow\CC$ is described by the Schrödinger equation. By applying external electric fields $\tilde u_i\in L^2(\RR)$, $i=1,\ldots,m$, we obtain the bilinear controlled version of the Schrödinger equation \cite{ChamP23} 
\begin{align}
\label{eq:bilinear_schroedinger}
\left\{
\begin{aligned}
&\mathrm{i}\partial_t\psi(t,\xi)=\left(-\partial_\xi^2+V(\xi)+\sum_{i=1}^m\tilde u_i(t)W_i(\xi)\right)\psi(t,\xi),\quad  (t,\xi) \in (0, \infty) \times \RR,\\
&\psi(0,\cdot)=\psi_0\in L^2(\RR;\CC)
\end{aligned}
\right.
\end{align}
with potentials $W_i\in L^\infty(\RR;\CC)$, $i=1,\ldots,m$, and $\partial_\xi^2$ is the closure of the second derivative on the functions with compact support leading to a self-adjoint operator in the Hilbert space $X=L^2(\RR;\CC)$. 

Furthermore, in many applications one can apply perturbation results to show that $-\partial_\xi^2+V$ is self-adjoint, which is trivial for bounded perturbations $V\in L^\infty(\RR)$, but also remains true for potentials that are not square integrable which arise for example in a~quantum harmonic oscillator \cite{Beauchard2013}.  Since  $-\partial_\xi^2+V$ is self-adjoint it follows that the operator
$A\coloneqq \mathrm{i}\partial_\xi^2-\mathrm{i}V$ is skew-adjoint and therefore it generates a unitary group. In particular, the semigroup generated by $A$ is contractive. Thus, the Schrödinger equation \eqref{eq:bilinear_schroedinger} can be written as a abstract bilinear control system of the form \eqref{eq:bilinear} with bounded multiplication operators $\tilde{B}_i=-\mathrm{i}W_i(\cdot)\in \mathcal{L}(X)$, $i=1,\ldots,m$, and bilinear mapping $F$ given by \eqref{eq:example_bilinear_sum}. If we add the co-located output $y$ as in \eqref{eq:co-located_mild_bilinear}, the Schrödinger equation is passive according to Theorem~\ref{th:passivity_bounded_B} with storage function $S(x) = \frac{1}{2}\norm{x}_X^2$.

\subsection{Fokker-Planck equation}
\label{ssec:fokker}
    Let $\Omega \subset \RR^n$ be a bounded domain with $C^2$-boundary $\partial\Omega$ and denote by $\vec n(\xi)$ the outward-pointing unit normal vector at $\xi \in \partial\Omega$. Following \cite{Breiten18}, we consider the bilinear controlled Fokker--Planck equation
    \begin{equation}\label{eq:fokker_pl}
        \left\{
        \begin{aligned}
            \partial_t x(t,\xi) &= \nu \Delta x(t,\xi) + \mathrm{div}(x(t,\xi) \nabla W(\xi)) + u(t)\mathrm{div}(x(t,\xi)\nabla \alpha(\xi)),\quad (t,\xi) \in (0,\infty) \times \Omega\\
            x(0,\cdot) &= x_0 \in L^2(\Omega)
        \end{aligned}
        \right.
    \end{equation}
    with the reflecting boundary condition
    \begin{equation*}
        (\nu \nabla x + x \nabla W) \vec{n} =0 \quad \text{on } \partial \Omega.
    \end{equation*}
    Here, $\Delta, \mathrm{div}, \nabla$ act on the spatial component $\xi$, $\nu > 0$ is constant, and $\alpha, W \in W^{2,p}(\Omega) \cap W^{1,\infty}(\Omega)$, where $p = 2$ if $n=1$, $p>2$ if $n=2$, and $p = n$ in all higher dimensions. Note that the intersection with $W^{1,\infty}(\Omega)$ is redundant for $n=1,2$ due to the corresponding Sobolev embedding. Moreover, we assume $\nabla\alpha \cdot \vec n = 0$ on $\partial\Omega$ in the weak sense.

    The Fokker--Planck equation models the probability density related to the motion of a large set of dragged Brownian particles in the domain $\Omega$ which are reflected at the boundary and which can be manipulated on a microscopic level by means of optical tweezers. In this context, $x_0$ is assumed to be the initial probability density, i.e. $\int_\Omega x_0(\xi) \,\mathrm{d} \xi = 1$, which is not necessary for our purposes.

    The Fokker--Planck equation can be written as a bilinear control system \eqref{eq:bilinear}
    on the state space $X= L^2(\Omega)$ with $A: D(A) \subset X \to X$ given by
    \begin{equation*}
        \begin{aligned}
            Ax &\coloneqq \nu \Delta x+\operatorname{div}(x \nabla W),\quad D(A) \coloneqq \left\{x \in \mathrm{H}^2(\Omega) \mid(\nu \nabla x+x \nabla W) \cdot \vec{n}=0 \text { on } \partial \Omega\right\},
        \end{aligned}
    \end{equation*}
    $B_1 \in \mathcal{L}(L^2(\Omega),X_{-1})$ being the unique extension of
    \begin{equation*}
        B \colon H^1(\Omega) \to L^2(\Omega), \quad Bx \coloneqq \mathrm{div}(x\nabla \alpha),
    \end{equation*}
    $B_2=0$, and $F\colon \CC \times X \to X$, $F(u_1,x) = u_1 x$. 
 
    As shown in \cite{Breiten18}, the operator $A$ has a simple eigenvalue at $0$ with eigenfunction given by $e^{\frac\Phi2}$, where $\Phi(\xi) = \log(\nu) + \frac{W(\xi)}{\nu}$. Let $M$ be the multiplication operator multiplying with $e^{\frac\Phi2}$. In \cite[Ch.~3]{MR4440806}, alternatively in \cite[Sec.~5.2]{hosfeld25}, it is proven, among many other facts, that $M$ is an isomorphism on $L^2(\Omega)$, $H^1(\Omega)$ and $H^2(\Omega)$, respectively, $\hat A \coloneqq MAM^{-1}$ with domain $D(\hat A) = MD(A)$ is self-adjoint and negative, and that $\hat B_1 \coloneqq MB_1M^{-1}$ is a bounded operator from $X$ to $\hat X_{-1/2}$. Here, $\hat X_{1/2}$ and $\hat X_{-1/2}$ are the fractional inter- and extrapolation space associated to the strictly negative operator $\hat{A}- I$ on $\hat{X}=X$ equipped with the norm $\norm{\cdot}_{\hat X} \coloneqq \norm{M\cdot}_{X}$. Note that $\norm{\cdot}_X$ and $\norm{\cdot}_{\hat X}$ as well as $\norm{\cdot}_{\hat X_\alpha}$ and $\norm{\cdot}_{X_\alpha}$ are equivalent, since $M$ is an isomorphism and by the definition of $\hat{A}$ and the fractional norms. By \cite[Prop.~5.1.3.]{MR2502023}, $\hat B_1$ is $L^2$-admissible for $\hat A$. Thus, Theorem~\ref{th:passivity_unbounded} is applicable for $\hat A$ and $\hat B_1$ when using  the co-located output 
    \begin{equation}
    \label{eq:co-located_fokker_pl}
        y(t) = \inprod{\hat B_1x(t)}{x(t)}_{\hat X_{-\frac12}, \hat X_{\frac12}},
    \end{equation}
    which gives the passivity of the Fokker-Planck system~\eqref{eq:fokker_pl} and \eqref{eq:co-located_fokker_pl} with respect to the equivalent storage function $S(x) = \frac12 \norm{x}_{\hat X}^2$ 
    \begin{equation*}
        \begin{aligned}
            \frac12\norm{x(t)}_{\hat X}^2 - \frac12\norm{x_0}_{\hat X}^2
= \Re\int_0^t \inprod{\hat Ax(s)}{x(s)}_{\hat X_{-\frac12}, \hat X_{\frac12}} + \inprod{u_1(s)\hat B_1x(s)}{ x(s)}_{\hat X_{-\frac12}, \hat X_{\frac12}} \ds 
\le \Re \int_0^t u(s)y(s) \ds.
        \end{aligned}
    \end{equation*}

\subsection{District heating pipe}
The evolution of the temperature of water
$\theta:[0,\infty)\times[0,\ell]\to\RR$ in a district heating pipe of
length $\ell>0$ can be derived from an energy balance
\cite{vdHeFRSSBMN17}. If the flow velocity
$u\in L^1_{\loc}([0,\infty);\RR)$ is assumed to be constant across the
spatial domain, the model is given by the bilinear equation
\begin{align}
\label{eq:bilinear_heat_pipe}
\left\{
\begin{aligned}
&        \partial_t \theta(t,\xi) = \kappa\partial_\xi^2 \theta(t,\xi) - u(t) \partial_\xi \!\!\;\theta(t,\xi) \;-\alpha (\theta(t,\xi)-\theta_{\rm ref}(t)), \quad (t,\xi) \in (0, \infty) \times [0,\ell],\\ &\theta(0,\cdot)\in L^2([0,\ell])
\end{aligned}
\right.
\end{align}
where $\kappa$ is a conductivity parameter, the term $\partial_\xi^2 \theta$ models the axial heat diffusion and the advection term $u(t) \partial_\xi \!\!\;\theta(t,\xi)$ describes the convective heat transport along the spatial domain $[0,\ell]$, and  $\theta_{\rm ref}:[0,\infty)\rightarrow\RR$ with $\theta_{\rm ref}\in L^\infty([0,\infty))$ is a given ambient temperature, $\alpha>0$ describes the heat loss. Note that for simplicity, we set all remaining physical  parameters such as the fluid density and pipe diameter to one, as this does not change the admissibility and passivity properties of the system. The equation~\eqref{eq:bilinear_heat_pipe} can be cast as an abstract bilinear system of the form~\eqref{eq:bilinear} in the Hilbert space $X=L^2([0,\ell])$ and with the operator $A=\kappa\partial_\xi^2-\alpha$ with suitable boundary conditions which can be, for instance, 
\begin{align}
    \label{eq:domA_pipe}
D(A)=\{x\in H^2([0,\ell])~|~ x(0)=0,\quad x'(\ell)=0\}.
\end{align}
The Dirichlet boundary condition in \eqref{eq:domA_pipe} at $\xi=0$ can be viewed as a prescribed input temperature value. 
The Neumann boundary condition at $\xi=\ell$ ensures that there is no axial heat conduction across the outlet, i.e.\ the heat energy leaves only by the advection. 

It is known that $A$ with domain \eqref{eq:domA_pipe} is self-adjoint and strictly negative. Furthermore, we set $B_1 \coloneqq \partial_\xi$ and $B_2\coloneqq \alpha\theta_{\rm ref}$ viewed as a multiplication operator which is bounded on $L^2$.

Similar to the treatment of the Fokker-Planck equation in Subsection~\ref{ssec:fokker}, we can show that $B_1\in\mathcal{L}(X,X_{-\frac12})$ which gives the $L^2$-admissibility. Furthermore, $B_2$ is bounded and therefore also $L^2$-admissible. Therefore, we obtain the passivity of the system \eqref{eq:bilinear_heat_pipe} from Theorem~\ref{th:passivity_unbounded}.

Alternatively to the boundary conditions presented in \eqref{eq:domA_pipe}, a negative operator $A$ can also be obtained by imposing Neumann boundary conditions or periodic boundary conditions of the form $\theta(t,0)=\theta(t,\ell)$ for all $t\geq 0$, i.e.\ the water in the pipe is running in a loop leading to analogous passivity results. 

\section{Conclusion}
\label{sec:conclusion}

We studied a general class of abstract bilinear control systems in Banach
spaces and proved existence and uniqueness of classical solutions under
admissibility assumptions. Moreover, we established continuous dependence of
mild solutions on the initial and input data. This is utilized to prove passivity for our abstract system class when the underlying space is a  Hilbert space. The admissibility assumptions leading to passivity are verified for various examples including a bilinear Schrödinger equation, a~Fokker-Planck equation and a fluid running in a heating or cooling pipe.
Possible future work may include an investigation of stabilizability for our class of bilinear control systems and the study of passivity notions in Banach spaces.

\bibliographystyle{abbrv}
\bibliography{references}
%%-----------------------------
%%      your bibliography
%%-----------------------------
\end{document}